\def\bt{\begin{thm}}
	\def\et{\end{thm}}
\def\bl{\begin{lem}}
	\def\el{\end{lem}}
\def\bd{\begin{defn}}
	\def\ed{\end{defn}}
\def\bc{\begin{cor}}
	\def\ec{\end{cor}}
\def\bp{\begin{proof}}
	\def\ep{\end{proof}}
\def\br{\begin{rem}}
	\def\er{\end{rem}}
\newtheorem{thm}{Theorem}[section]
\newtheorem{lem}[thm]{Lemma}
\newtheorem{defn}[thm]{Definition}
\newtheorem{example}[thm]{Example}
\newtheorem{rem}[thm]{Remark}
\newtheorem{cor}[thm]{Corollary}
\newtheorem{proposition}[thm]{Proposition}
\numberwithin{equation}{section}
\title{Zero Distribution of Random Bernoulli Polynomial Mappings }
\author{Turgay Bayraktar \& \c{C}\.i\u{g}dem \c{C}el\.ik} 
\thanks{T.B. and \c{C}.\c{C}. are partially supported by T\"{U}B\.{I}TAK grant ARDEB-1001/119F184}
\address{Faculty of Engineering and Natural Sciences, Sabanc{\i} University, \.{I}stanbul, Turkey}
\email{tbayraktar@sabanciuniv.edu}
\email{cigdemcelik@sabanciuniv.edu}
\begin{document}
	
	\begin{abstract}
	In this note, we study asymptotic zero distribution of multivariable full system of random polynomials with independent Bernoulli coefficients. We prove that with overwhelming probability their simultaneous zeros  are discrete and the associated normalized empirical measure of zeros asymptotic to the Haar measure on the unit torus.
	\end{abstract}
	
	\maketitle
\section{Introduction}
	A random Kac polynomial on the complex plane is of the form 
\begin{equation}\label{rp}	
	f_d(z)=\sum_{j=0}^{d}a_jz^j
\end{equation}	
	where the coefficients $a_j$ are independent copies of the (real or complex) standard Gaussian. A classical result due to Kac, Hammersley and Shepp \& Vanderbei \cite{Kac, Ham, SV} asserts that almost surely the normalized  empirical measure of zeros $\delta_{Z(f_d)}:=\frac1d\sum_{f_d(\zeta)=0}\delta_{\zeta}$, converges to normalized arc length measure on $S^1:=\{|z|=1\}$ as $d\to\infty$. Asymptotic zero distribution of Kac polynomials with independent identically distributed (i.i.d.) discrete random coefficients have also been studied extensively (see eg. \cite{LO,ET}). More recently, Ibragimov and Zaporozhets \cite{IZ11} proved that the empirical measure of zeros $\delta_{Z(f_d)}$ almost surely converges to the the normalized arc length measure if and only if the moment condition $\mathbb{E}[\log(1+|a_i|)]<\infty$ holds. This property can be considered as a global universality property of the zeros of random polynomials (see also \cite{TV} for a local version).  
	
	Building upon the work of Shiffman and Zelditch \cite{SZ}, equilibrium distribution of random systems of polynomials with Gaussian coefficients was obtained by Bloom \& Shiffman \cite{BS} and Shiffman \cite{S}. More recently, these results were generalized for i.i.d. random coefficients with bounded density \cite{TBI,bay}. We refer the reader to the survey \cite{BCHM} and references therein for the state of the art. On the other hand, asymptotic zero distribution of random polynomial mappings with discrete random coefficients remained open (cf. \cite{B,BD,BBL}). In this note, we study asymptotic zero distribution of multivariable full system of random polynomials with independent Bernoulli coefficients. 
	
\subsection{Statement of the results}
A \emph{ random Bernoulli polynomial} is of the form
	\begin{equation*}\label{eq:pol}
		f_{d,i}(\boldsymbol{x})=\sum_{|\textbf{$J$}|\leq d}\alpha_{i,\textbf{$J$}}\boldsymbol{x}^\textbf{$J$} \in \mathbb{C}\left[x_1, \ldots, x_{n}\right] 
	\end{equation*}
	
	\noindent where $\boldsymbol{x}^{J} = x_{1}^{j_1}\ldots x_{n}^{j_n}$ and $\alpha_{i,J}$ are $\pm1$ Bernoulli random variables for $i=1,\ldots,n$. Throughout this work, we consider systems $(f_{d,1},\ldots,f_{d,n})$ of random Bernoulli polynomials with independent coefficients. We write  $\boldsymbol{f}_d=\left( f_{d,1},\ldots,f_{d,n} \right)$ for short. We denote the collection of all systems of polynomials in $n$ variables and of degree $d$   by $Poly_{n,d}$ that is endowed with the product probability measure $Prob_d$. 
	\begin{thm}{\label{th:main}} Let $\boldsymbol{f}_d=(f_{d,1},\ldots,f_{d,n})$ be a system of random polynomials with independent $\pm1$ valued Bernoulli coefficients. Then there exists a dimensional constant $K=K(n)>0$ and an exceptional set $\mathcal{E}_{n,d}\subset Poly_{n,d}$ such that $Prob_d(\mathcal{E}_{n,d})\leq K/d$ and for all $\boldsymbol{f}_d\in	Poly_{n,d}\setminus\mathcal{E}_{n,d}$ the simultaneous zeros $Z(\boldsymbol{f}_d)$ of the system $\boldsymbol{f}_d$ are isolated with $\#Z(\boldsymbol{f}_d)=d^n.$
	\end{thm}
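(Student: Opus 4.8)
\medskip
\noindent\emph{Proof proposal.} The exceptional set will be the locus on which the top-degree parts of the $f_{d,i}$ acquire a common projective zero. Write $\hat f_{d,i}(\boldsymbol x)=\sum_{|J|=d}\alpha_{i,J}\boldsymbol x^{J}$ for the degree-$d$ homogeneous part of $f_{d,i}$. Since the coefficient $\alpha_{i,(d,0,\dots,0)}$ of $x_1^{d}$ is $\pm1\ne 0$, each $f_{d,i}$ has degree exactly $d$, its homogenization $\widetilde f_{d,i}\in\mathbb C[x_0,\dots ,x_n]$ is a form of degree $d$, and $\widetilde f_{d,i}\big|_{\{x_0=0\}}=\hat f_{d,i}$. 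By B\'ezout's theorem the intersection of $\widetilde f_{d,1},\dots ,\widetilde f_{d,n}$ in $\mathbb P^{n}$ has degree $d^{n}$ as soon as it is zero-dimensional; a positive-dimensional component would meet the hyperplane at infinity $\{x_0=0\}$, and a zero-dimensional component lying there is precisely a common zero of $\hat f_{d,1},\dots ,\hat f_{d,n}$ in $\mathbb P^{n-1}$. Hence on the complement of
\[
\mathcal E_{n,d}:=\bigl\{\,\boldsymbol f_d\in Poly_{n,d}\ :\ \hat f_{d,1},\dots ,\hat f_{d,n}\ \text{have a common zero in}\ \mathbb P^{n-1}\,\bigr\}=\bigl\{\mathrm{Res}(\hat f_{d,1},\dots ,\hat f_{d,n})=0\bigr\}
\]
($\mathrm{Res}$ the multivariate resultant), the set $Z(\boldsymbol f_d)\subset\mathbb C^n$ is finite and carries total multiplicity $d^{n}$; it remains to show $Prob_d(\mathcal E_{n,d})\le K(n)/d$. (If one insists on $\#Z(\boldsymbol f_d)=d^{n}$ as a set, one adds to $\mathcal E_{n,d}$ the vanishing locus of the corresponding discriminant polynomial and estimates it by the same method.)

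I would prove $Prob_d(\mathcal E_{n,d})\le K(n)/d$ by induction on $n$; the case $n=1$ is vacuous, since $\hat f_{d,1}=\alpha_{1,(d)}x_1^{d}$ has no zero on $\mathbb P^{0}$. For the inductive step, condition on $\hat f_{d,1},\dots ,\hat f_{d,n-1}$ and use the Poisson product formula: up to sign, and as a polynomial in the coefficients of $\hat f_{d,n}$,
\[
\mathrm{Res}(\hat f_{d,1},\dots ,\hat f_{d,n})=\mathrm{Res}\bigl(\hat f_{d,1}|_{H},\dots ,\hat f_{d,n-1}|_{H}\bigr)^{d}\cdot\!\!\!\prod_{[\zeta]\in Z(\hat f_{d,1},\dots ,\hat f_{d,n-1})}\!\!\!\hat f_{d,n}([\zeta]),\qquad H=\{x_n=0\}\cong\mathbb P^{n-2}.
\]
The leading factor is the resultant of $n-1$ Bernoulli forms of degree $d$ in $n-1$ variables, so by the inductive hypothesis it vanishes with probability $\le K(n-1)/d$; moreover that same hypothesis, applied to $\hat f_{d,1}|_{H},\dots ,\hat f_{d,n-1}|_{H}$, shows that off an event of probability $\le K(n-1)/d$ the set $Z(\hat f_{d,1},\dots ,\hat f_{d,n-1})$ is disjoint from $H$, whence it is necessarily finite (a positive-dimensional projective variety must meet $H$) and of cardinality at most $d^{n-1}$ by B\'ezout. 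Conditionally on all this, $\mathcal E_{n,d}$ is the event that $\hat f_{d,n}$ vanishes at one of these $\le d^{n-1}$ points, and for a \emph{fixed} $[\zeta]$ the value $\hat f_{d,n}([\zeta])=\sum_{|J|=d}\alpha_{n,J}\zeta^{J}$ is a $\pm1$-weighted sum of the numbers $\zeta^{J}$; the Erd\H{o}s--Littlewood--Offord inequality then bounds $Prob\bigl(\hat f_{d,n}([\zeta])=0\bigr)$ by $C_n\big/\sqrt{\#\{J:\zeta^{J}\ne 0\}}$, which vanishes when $[\zeta]$ is a coordinate point and is $O(1/\sqrt d)$ otherwise.

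The crux, and the step I expect to be the main obstacle, is that the naive union bound over the $\le d^{n-1}$ points of $Z(\hat f_{d,1},\dots ,\hat f_{d,n-1})$ is far too lossy at the rate $O(1/\sqrt d)$ per point; one must exploit that the ``bad'' projective points are atypical. I would argue by a dichotomy. Call $[\zeta]$ \emph{cyclotomic} if each of its coordinates is a root of unity, and \emph{generic} otherwise. If $[\zeta]$ is cyclotomic, grouping the monomials $\boldsymbol x^{J}$ by the (finitely many) values of $\zeta^{J}$ and applying Erd\H{o}s--Littlewood--Offord within each group gives $Prob\bigl(\hat f_{d,n}([\zeta])=0\bigr)=O\bigl(d^{-(n-1)/2}\bigr)$, with the stronger bound $O\bigl(d^{-(n-1)}\bigr)$ unless every coordinate of $[\zeta]$ is $\pm1$ (two independent real anticoncentration constraints now appearing); since the forms are independent, $Prob\bigl(\text{all }\hat f_{d,i}\text{ vanish at }[\zeta]\bigr)=O\bigl(d^{-n(n-1)/2}\bigr)\le O(1/d)$ for $n\ge 2$, the governing case being $[\zeta]=[\varepsilon_1:\cdots:\varepsilon_n]$ with $\varepsilon_i=\pm1$, where each $\hat f_{d,i}([\zeta])$ is literally a signed sum of $\binom{d+n-1}{n-1}$ i.i.d.\ Rademacher variables; summing these contributions over the cyclotomic points keeps the total $O(1/d)$, the $[\varepsilon_1:\cdots:\varepsilon_n]$ terms dominating. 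If instead $[\zeta]\in Z(\hat f_{d,1},\dots ,\hat f_{d,n-1})$ is generic, then $\hat f_{d,n}([\zeta])=0$ forces a nontrivial $\pm1$-linear relation among the algebraic numbers $\zeta^{J}$, $|J|=d$, i.e.\ places the sign vector $(\alpha_{n,J})_{|J|=d}$ in the orthocomplement of $(\zeta^{J})_{|J|=d}$; a counting argument — this orthocomplement contains only exponentially few of the $2^{\binom{d+n-1}{n-1}}$ sign vectors once $\zeta$ is not cyclotomic — gives $Prob\bigl(\hat f_{d,n}([\zeta])=0\bigr)=O(2^{-c_n d})$ uniformly, so the union bound over the $\le d^{n-1}$ generic points of $Z(\hat f_{d,1},\dots ,\hat f_{d,n-1})$ contributes only $o(1/d)$. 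Adding the inductive term, the cyclotomic term $O(1/d)$, and the negligible generic term yields $Prob_d(\mathcal E_{n,d})\le K(n)/d$, which together with the first paragraph is Theorem~\ref{th:main}.
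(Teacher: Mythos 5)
Your deterministic reduction is sound and, if anything, cleaner than the paper's. Once the top-degree forms $\hat f_{d,1},\dots,\hat f_{d,n}$ have no common projective zero, Bézout gives that $Z(\boldsymbol f_d)\subset\mathbb C^n$ is a zero-dimensional cycle of degree $d^n$, and your exceptional set is the single resultant hypersurface $\{\mathrm{Res}(\hat f_{d,1},\dots,\hat f_{d,n})=0\}$. The paper instead works in the sparse-resultant/Bernstein framework: it checks the directional resultant at all $n+1$ inward normals to $nd\Sigma_n$ (the direction $-(1,\dots,1)$ recovers exactly your top-degree condition, while $e_1,\dots,e_n$ give the facets $\{j_m=0\}$), and must separately exclude roots lying on a coordinate hyperplane because Bernstein's theorem only counts zeros in $(\mathbb C^*)^n$. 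Your Bézout route replaces that whole apparatus with one classical resultant, which is a genuine simplification on the algebraic side; both versions count zeros with multiplicity, so your parenthetical caveat about $\#Z$ applies equally to the paper.

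The gap is entirely in the probabilistic estimate. The paper's crucial input is Kozma and Zeitouni's theorem (quoted as Theorem~\ref{th:KZ}): $n+1$ independent degree-$d$ Bernoulli polynomials in $n$ variables have a common root with probability $\le K(n)/d$. Your needed bound $Prob_d(\mathcal E_{n,d})\le K(n)/d$ \emph{does} follow from that theorem --- stratify $\mathbb P^{n-1}$ by the last nonvanishing coordinate; on each affine stratum the dehomogenized system is an overdetermined Bernoulli system of $n$ polynomials in at most $n-1$ variables, and Kozma--Zeitouni applies --- but you do not invoke it, and the direct argument you sketch does not close. Two concrete problems. First, in the cyclotomic case you abandon the conditioning and use independence of all $n$ forms, which forces a union bound over \emph{all} cyclotomic points of $\mathbb P^{n-1}$; there are infinitely many (a family of size $\Theta(M^{n-1})$ for each order $M$), and your per-point bound $O(d^{-n(n-1)})$ is not shown to be summable over that family. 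If you instead condition on $\hat f_{d,1},\dots,\hat f_{d,n-1}$ to truncate to $\le d^{n-1}$ points, you lose the $n$-fold power and the per-point bound $O(d^{-(n-1)})$ is no longer strong enough. Second, the claim that for a non-cyclotomic $[\zeta]$ in the zero set $Prob(\hat f_{d,n}([\zeta])=0)=O(2^{-c_n d})$ \emph{uniformly} is asserted without proof and is not true at that level of generality: non-cyclotomic algebraic points can support many integer relations among the monomial values $\zeta^J$ (e.g.\ $\zeta=[2:1:\cdots:1]$ gives $\zeta^J=2^{j_1}$), so the size of the $\pm1$-orthocomplement depends delicately on the additive structure of $\{\zeta^J\}$, not merely on whether $\zeta$ is a root of unity. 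These are precisely the difficulties that Kozma and Zeitouni's paper is devoted to overcoming, so your proposal is in effect reproving a cited theorem by a sketch that omits its hard core. Replacing your second and third paragraphs with the dehomogenization-plus-Kozma--Zeitouni observation above would give a complete and genuinely shorter proof of Theorem~\ref{th:main}.
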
	
For a system $\boldsymbol{f}_d\in Poly_{n,d}$, if the simultaneous zeros $Z(\boldsymbol{f}_d)$ are isolated we denote the corresponding normalized empirical measure by $\delta_{Z(\boldsymbol{f}_d)}$. That is $\delta_{Z(\boldsymbol{f}_d)}$ is a probability measure supported on the isolated zeros with equal weight on each zero. We also let  $\nu_{\text{Haar}}$ denote the Haar measure on $(S^1)^n$ of total mass 1.	
As an application of Theorem \ref{th:main} together with a deterministic equidistribution result \cite[Theorem 1.7]{DGS}, we obtain asymptotic zero distribution of random Bernoulli polynomial mappings:	
	\begin{cor}\label{cor:eq} Let $\boldsymbol{f}_d=(f_{d,1},\ldots,f_{d,n})$ be system of random polynomials with independent $\pm1$ valued Bernoulli coefficients and $\mathcal{E}_{n,d}\subset Poly_{n,d}$ be as in Theorem \ref{th:main}. Then for each sequence $\boldsymbol{f}_d\in	Poly_{n,d}\setminus\mathcal{E}_{n,d}$ we have  
			\begin{equation*}
			\lim_{d\rightarrow\infty}\delta_{Z(\boldsymbol{f}_d)}= \nu_{\text{Haar}}.
		\end{equation*} in the weak topology.
	In particular, $\delta_{Z(\boldsymbol{f}_d)}\to\nu_{Haar}$ in probability $Prob_d$ as $d\to\infty$.
\end{cor}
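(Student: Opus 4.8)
The plan is to obtain Corollary~\ref{cor:eq} by feeding the conclusion of Theorem~\ref{th:main} into the deterministic equidistribution criterion \cite[Theorem 1.7]{DGS}, and then to convert the resulting convergence of empirical measures into convergence in probability via a short union bound using $Prob_d(\mathcal{E}_{n,d})\le K/d$. For the first step, fix $d>K$ and a system $\boldsymbol f_d\in Poly_{n,d}\setminus\mathcal{E}_{n,d}$. By Theorem~\ref{th:main} the common zero set $Z(\boldsymbol f_d)$ consists of exactly $d^n$ isolated points; since $d^n$ is the full B\'ezout number, this means no mass of the zero cycle escapes to infinity and no positive-dimensional component occurs, so $\delta_{Z(\boldsymbol f_d)}$ is a bona fide probability measure — and this is exactly the geometric hypothesis required by \cite[Theorem 1.7]{DGS}. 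The only remaining hypothesis there is a size condition on the coefficients (of the form $\tfrac1d\log$ of a suitable norm of the coefficient vector of each $f_{d,i}$ tending to $0$, together with the non-vanishing of the constant terms), and this is automatic for a Bernoulli system: all coefficients are $\pm1$, so the coefficient vector of $f_{d,i}$ has length $\binom{n+d}{n}$ and, in any of the usual norms, size between $1$ and $\binom{n+d}{n}$; since $\log\binom{n+d}{n}=O_n(\log d)=o(d)$ the size bound holds, while $\alpha_{i,\boldsymbol{0}}=\pm1\neq 0$. Thus \cite[Theorem 1.7]{DGS} applies and gives $\delta_{Z(\boldsymbol f_d)}\to\nu_{\mathrm{Haar}}$ weakly, which, as $\boldsymbol f_d$ ranges over all sequences in $Poly_{n,d}\setminus\mathcal{E}_{n,d}$, is the first assertion.

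For convergence in probability I would use that $Poly_{n,d}$ is a \emph{finite} set ($2^{\,n\binom{n+d}{n}}$ systems, with $Prob_d$ the uniform measure). Fixing a metric $\varrho$ for the weak topology on probability measures, the convergence just established along every sequence avoiding $\mathcal{E}_{n,d}$ is equivalent to
\begin{equation*}
\varepsilon_d:=\max_{\boldsymbol f_d\in Poly_{n,d}\setminus\mathcal{E}_{n,d}}\varrho\bigl(\delta_{Z(\boldsymbol f_d)},\,\nu_{\mathrm{Haar}}\bigr)\longrightarrow 0\qquad(d\to\infty),
\end{equation*}
for otherwise one could pick maximizers $\boldsymbol f_d$ along a subsequence (filling the other degrees arbitrarily, which is possible since $Poly_{n,d}\setminus\mathcal{E}_{n,d}\neq\emptyset$ when $d>K$) to contradict the first step. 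Consequently, for any $\varepsilon>0$ and all $d$ large enough that $\varepsilon_d<\varepsilon$,
\begin{equation*}
Prob_d\bigl(\varrho(\delta_{Z(\boldsymbol f_d)},\nu_{\mathrm{Haar}})>\varepsilon\bigr)\le Prob_d(\mathcal{E}_{n,d})\le\frac{K}{d}\longrightarrow 0,
\end{equation*}
which is the claimed convergence in probability.

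The only nontrivial ingredient is \cite[Theorem 1.7]{DGS}, which I would invoke as a black box; on our side the work is essentially bookkeeping — matching normalization conventions and verifying that the hypotheses listed there really hold on $Poly_{n,d}\setminus\mathcal{E}_{n,d}$. The delicate point, and the one I would check most carefully, is whether \cite[Theorem 1.7]{DGS} demands anything beyond ``$d^n$ isolated zeros $+$ coefficient size control'' — for instance non-degeneracy of the leading degree-$d$ forms of the $f_{d,i}$ along the proper faces of the common Newton polytope $d\Delta_n$ ($\Delta_n$ the standard simplex), or that all $d^n$ zeros lie in the torus $(\mathbb{C}^{\ast})^n$ rather than merely in $\mathbb{C}^n$. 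If it does, I would argue that having $d^n$ isolated affine zeros already forces the leading forms to have no common zero at infinity (so the system meets the full B\'ezout/mixed-volume bound and is facet non-degenerate by Bernstein--Kushnirenko--Khovanskii), and that for a $\pm1$ system a common zero cannot lie on a coordinate hyperplane except on a further exceptional set — a degeneracy of the same nature as those already controlled in the proof of Theorem~\ref{th:main}, which can therefore be absorbed into $\mathcal{E}_{n,d}$ at the cost of enlarging the constant $K$, leaving the bound $Prob_d(\mathcal{E}_{n,d})\le K/d$ and hence the whole argument intact.
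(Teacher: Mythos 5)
Your overall architecture matches the paper's: apply the deterministic equidistribution criterion of \cite[Theorem 1.7]{DGS} (quoted in the paper as Theorem~\ref{th:dgs}) to every $\boldsymbol{f}_d\in Poly_{n,d}\setminus\mathcal{E}_{n,d}$, verify the supremum--norm size bound from the $\pm1$ coefficients, and then pass to convergence in probability via $Prob_d(\mathcal{E}_{n,d})\le K/d\to 0$. The norm estimate $\|f_{d,i}\|_{\sup}\le\binom{n+d}{d}$, hence $\log\|f_{d,i}\|_{\sup}=o(d)$, is precisely the paper's computation.

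The place where you are misreading the target theorem is the ``delicate point'' you flag. The hypothesis of Theorem~\ref{th:dgs} is not ``$d^n$ isolated zeros plus coefficient size plus nonzero constant term''; it is the non-vanishing of \emph{all} directional resultants $\mathcal{R}es_{\mathcal{A}^{\boldsymbol{v}}}(f_{d,1}^{\boldsymbol{v}},\ldots,f_{d,n}^{\boldsymbol{v}})\neq 0$ for $\boldsymbol{v}\in\mathbb{Z}^n\setminus\{\boldsymbol{0}\}$, together with $\log\|f_{d,i}\|_{\sup}=o(d)$. Your attempt to derive the resultant condition from the count $\#Z(\boldsymbol{f}_d)=d^n$ via a B\'{e}zout/BKK argument is both circuitous and not quite airtight: equality with the B\'{e}zout number in $\mathbb{C}^n$ rules out common zeros at infinity (the direction $\boldsymbol{v}_{n+1}=-\sum\boldsymbol{e}_m$) but says nothing by itself about the coordinate-hyperplane directions $\boldsymbol{e}_m$, and the directional resultant vanishing is a codimension-one algebraic condition that is not in general equivalent to the directed facial system having a common toric root. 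None of this extra work is needed. Look at how $\mathcal{E}_{n,d}$ is actually defined in the proof of Theorem~\ref{th:main} (display~(\ref{exceptional})): it is by construction the set of $\boldsymbol{f}_d$ for which some directional resultant vanishes (or some zero lies on a coordinate hyperplane). So for $\boldsymbol{f}_d\notin\mathcal{E}_{n,d}$ the resultant hypothesis of Theorem~\ref{th:dgs} holds tautologically, and your ``fallback'' of absorbing further degeneracies into $\mathcal{E}_{n,d}$ is unnecessary because nothing further needs to be absorbed.

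Your convergence-in-probability paragraph (metrize the weak topology, use finiteness of $Poly_{n,d}$ to pass from sequence-wise convergence to $\varepsilon_d\to 0$, then apply the union bound on $\mathcal{E}_{n,d}$) is correct and in fact spells out a uniformity step that the paper leaves implicit. That part is a welcome improvement in rigor over the paper's one-line remark.

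Also note a small inaccuracy: you assert $\alpha_{i,\boldsymbol{0}}=\pm1\neq 0$ as if it were a needed hypothesis; there is no ``nonzero constant term'' condition in Theorem~\ref{th:dgs}, and this remark should be dropped.

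Summary: same route as the paper with one genuine misidentification. Replace the $\#Z=d^n\Rightarrow$ resultant non-vanishing derivation by a direct citation of the definition of $\mathcal{E}_{n,d}$, and the proof is exactly the paper's argument (with an extra, and valid, expansion of the in-probability step).
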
		
		
Finally, we consider the measure valued random variables 	
\begin{equation}\label{zeromeasure}
\widetilde{Z}(\boldsymbol{f}_d):=
\begin{cases}
	\sum_{\xi_i\in Z(\boldsymbol{f}_d)} \delta(\xi_i) &  \text{for}\  \boldsymbol{f}_d\in Poly_{n,d}\setminus\mathcal{E}_{n,d} \\
	
	0 & \text{otherwise} \\
\end{cases}
\end{equation}
and define \textit{the expected zero measure} by 
\begin{equation}
	\left\langle \mathbb{E}[\widetilde{Z}(\boldsymbol{f}_d)],\varphi\right\rangle = \int_{Poly_{n,d}\setminus\mathcal{E}_{n,d}} \sum_{\xi_i\in Z(\boldsymbol{f}_d)}\varphi(\xi_i)\ dProb_d(\boldsymbol{f}_d)
\end{equation}
where $\varphi$ is a continuous function with compact support in $\mathbb{C}^n$ and $\mathcal{E}_{n,d}$ denote the exceptional set given by Theorem \ref{th:main}.
\begin{thm}\label{th:exp}
	Let $\boldsymbol{f}_d=(f_{d,1},\ldots,f_{d,n})$ be a system of random polynomials with independent $\pm1$ valued Bernoulli coefficients. Then
	\begin{equation*}
	 \lim_{d\to\infty}d^{-n}\mathbb{E}[\widetilde{Z}(\boldsymbol{f}_d)]=\nu_{\text{Haar}}
	\end{equation*} in the weak topology.

\end{thm}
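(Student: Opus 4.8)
The plan is to reduce Theorem \ref{th:exp} to Corollary \ref{cor:eq}, by upgrading the deterministic equidistribution of a single (arbitrary) non-exceptional sequence to a statement that is uniform over the complement of the exceptional set, and then integrating. Fix a test function $\varphi\in C_c(\mathbb{C}^n)$. For every $\boldsymbol{f}_d\in Poly_{n,d}\setminus\mathcal{E}_{n,d}$, Theorem \ref{th:main} guarantees that $Z(\boldsymbol{f}_d)$ consists of exactly $d^n$ isolated points, so $d^{-n}\sum_{\xi_i\in Z(\boldsymbol{f}_d)}\varphi(\xi_i)=\langle\delta_{Z(\boldsymbol{f}_d)},\varphi\rangle$, and consequently
\begin{equation*}
 d^{-n}\left\langle\mathbb{E}[\widetilde{Z}(\boldsymbol{f}_d)],\varphi\right\rangle=\int_{Poly_{n,d}\setminus\mathcal{E}_{n,d}}\left\langle\delta_{Z(\boldsymbol{f}_d)},\varphi\right\rangle\,dProb_d(\boldsymbol{f}_d).
\end{equation*}
Since each $\delta_{Z(\boldsymbol{f}_d)}$ is a probability measure, the integrand is bounded by $\|\varphi\|_\infty$ uniformly in $d$; the subtlety is that the underlying probability space $Poly_{n,d}$ varies with $d$, so one cannot invoke dominated convergence directly against a fixed limit.

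The key step is to promote Corollary \ref{cor:eq} to the uniform estimate
\begin{equation*}
 \eta_d(\varphi):=\sup_{\boldsymbol{f}_d\in Poly_{n,d}\setminus\mathcal{E}_{n,d}}\left|\left\langle\delta_{Z(\boldsymbol{f}_d)},\varphi\right\rangle-\left\langle\nu_{\text{Haar}},\varphi\right\rangle\right|\longrightarrow 0\quad\text{as } d\to\infty.
\end{equation*}
I would establish this by contradiction: suppose $\eta_d(\varphi)\geq\varepsilon_0$ along an infinite set $S$ of degrees. For each $d\in S$ pick a near-maximizer $\boldsymbol{g}_d\in Poly_{n,d}\setminus\mathcal{E}_{n,d}$ with $|\langle\delta_{Z(\boldsymbol{g}_d)},\varphi\rangle-\langle\nu_{\text{Haar}},\varphi\rangle|\geq\varepsilon_0/2$, and for the remaining $d>K$ pick $\boldsymbol{g}_d$ arbitrarily in $Poly_{n,d}\setminus\mathcal{E}_{n,d}$, which is nonempty because $Prob_d(\mathcal{E}_{n,d})\leq K/d<1$. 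The resulting sequence $(\boldsymbol{g}_d)_{d>K}$ lies in $Poly_{n,d}\setminus\mathcal{E}_{n,d}$, so Corollary \ref{cor:eq} forces $\langle\delta_{Z(\boldsymbol{g}_d)},\varphi\rangle\to\langle\nu_{\text{Haar}},\varphi\rangle$, contradicting the lower bound along the infinite set $S$.

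Granting the uniform estimate, the proof concludes by the triangle inequality: writing $Prob_d(Poly_{n,d}\setminus\mathcal{E}_{n,d})=1-Prob_d(\mathcal{E}_{n,d})$ and using $Prob_d(\mathcal{E}_{n,d})\leq K/d$,
\begin{equation*}
 \left|d^{-n}\left\langle\mathbb{E}[\widetilde{Z}(\boldsymbol{f}_d)],\varphi\right\rangle-\left\langle\nu_{\text{Haar}},\varphi\right\rangle\right|\leq\eta_d(\varphi)+\|\varphi\|_\infty\,Prob_d(\mathcal{E}_{n,d})\leq\eta_d(\varphi)+\frac{K\|\varphi\|_\infty}{d}\longrightarrow 0.
\end{equation*}
Since $\varphi\in C_c(\mathbb{C}^n)$ was arbitrary, this gives $d^{-n}\mathbb{E}[\widetilde{Z}(\boldsymbol{f}_d)]\to\nu_{\text{Haar}}$ in the weak topology. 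I expect the only genuine obstacle to be the uniformization step $\eta_d(\varphi)\to 0$ (which is really just the topological fact that ``every transversal sequence converges to $p$'' is equivalent to ``the sup-distance to $p$ tends to $0$'', applied through the compactly-supported test function); the rest is bookkeeping, and no tightness or total-mass considerations intervene because the test functions have compact support and the normalized counting measures are probability measures.
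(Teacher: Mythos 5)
Your proof is correct, but it takes a genuinely different route from the paper. The paper's proof of Theorem \ref{th:exp} goes through the quantitative discrepancy machinery of \S 4: it reduces to testing against characteristic functions of polyannular open sets $U$, then bounds $\nu_d(U)-\nu_{\text{Haar}}(U)$ by the expected angular and radial discrepancies $\mathbb{E}[\Delta_{\text{ang}}]$, $\mathbb{E}[\Delta_{\text{rad}}]$, which are shown to decay in Proposition \ref{prop} using the Erd\"{o}s--Tur\'{a}n size bound (Proposition \ref{prop1}) and the quantitative discrepancy estimates of \cite{DGS} (Theorem \ref{th1.4}). You instead treat Corollary \ref{cor:eq} as a black box and upgrade its ``for every non-exceptional sequence'' conclusion to the uniform estimate $\eta_d(\varphi)\to 0$ by the standard diagonal/contradiction argument, then integrate. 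The logic of that upgrade is sound: if the supremum over $Poly_{n,d}\setminus\mathcal{E}_{n,d}$ failed to vanish, choosing near-maximizers along the bad degrees and arbitrary non-exceptional systems elsewhere (possible since $Prob_d(\mathcal{E}_{n,d})\le K/d<1$ for $d>K$, and $Poly_{n,d}$ is finite so the sup is attained) would produce a sequence violating Corollary \ref{cor:eq}. Your triangle-inequality bookkeeping at the end is also correct, since each $\delta_{Z(\boldsymbol{f}_d)}$ is a probability measure and $\#Z(\boldsymbol{f}_d)=d^n$ off the exceptional set by Theorem \ref{th:main}. The trade-off: your argument is shorter and conceptually cleaner, avoiding the explicit discrepancy estimates entirely, but it loses the quantitative rate $O(d^{-1/3}(\log d)^{(2n-1)/3})$ that the paper's route provides; both ultimately rest on \cite{DGS}, but you use only the qualitative Theorem \ref{th:dgs} (through Corollary \ref{cor:eq}) whereas the paper exploits the quantitative Theorem \ref{th1.4}. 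One small stylistic point: your $\eta_d(\varphi)$ collides notationally with the Erd\"{o}s--Tur\'{a}n size $\eta(\boldsymbol{f})$ already used in \S 4; choose a different symbol to avoid confusion.
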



The outline of this work as follows. In \S 2, we review some basic properties of resultants. In particular, we recall multi-polynomial resultant and sparse resultant for polynomial systems \cite{GKZ,Cox} as well as directional resultant \cite{DS}.  In \S 3, we prove the main result Theorem \ref{th:main}. Finally, in \S 4 we prove Theorem \ref{th:exp}.
		
	
\section{Preliminaries}
In this section, we review some basic results in algebraic geometry and discrepancy theory related to our results. More precisely, we discuss the multi-homogenous (classical) resultant and the sparse eliminant as well as the relation of these two notions. For a detailed account of the subject and proofs we refer the reader to \cite{GKZ,Cox}. We also discuss the sparse resultant introduced by  D'Andrea and Sombra, and corresponding directional sparse resultants \cite{DGS,DS}.
	\subsection{Lattice points, polytopes}
	For a nonempty subset $P\subset\mathbb{R}^n$, we denote its convex hull in $\mathbb{R}^n$ by $conv(P)$. For two nonempty convex sets $Q_1,Q_2$, their Minkowski sum is defined as 
	$$Q_1+Q_2:=\{q_1+q_2: q_1\in Q_1, q_2\in Q_2\}$$
	 and for $\lambda\in\mathbb{R}$, the scaled polytope is of the form
	 $$\lambda Q:=\{\lambda q: q\in Q\}.$$	
	 It is well known that $Vol_n(d_1Q_1+\ldots+d_nQ_n)$ is a homogenous polynomial of degree $n$ in the variables $d_1,\ldots,d_n\in\mathbb{Z}_{+}$ where $Vol_n$ denotes the normalized volume of the subsets in $\mathbb{R}^n$ with respect to the Lebesgue measure. The coefficient of the monomial $d_1\ldots d_n$ is called the \textit{mixed volume} of $Q_1,\ldots,Q_n$ and denoted by $MV(Q_1,\ldots,Q_n)$. One can use the polarization formula to compute the mixed volume of the convex sets $Q_1,\ldots,Q_n$. Namely, 
	 $$MV_n(Q_1,\ldots,Q_n)=\sum_{k=1}^{n}\sum_{1\leq j_1\leq\ldots\leq j_k\leq n}(-1)^{n-k}Vol_n(Q_{j_1}+\ldots+Q_{j_k}).$$
	In particular, if $Q=Q_1=\ldots=Q_n$ then 
	
	$$ MV_n(Q):=MV_n(Q,\ldots,Q)=n!Vol_n(Q).$$

	For a convex set $Q\subset\mathbb{R}^n$ its \textit{support function} $s_{Q}:\mathbb{R}^n\to\mathbb{R}$ is defined by 
	\begin{equation}\label{support function}
		s_{Q}(\boldsymbol{v}):=\inf_{\boldsymbol{q}\in Q}\left\langle \boldsymbol{q},\boldsymbol{v}\right\rangle
	\end{equation} 
	where $\left\langle \cdot,\cdot\right\rangle $ represents the Euclidean inner product of $\mathbb{R}^n$. Given a vector $\boldsymbol{v}\in\Bbb{R}^n$ the equation $$ \left\langle \boldsymbol{q},\boldsymbol{v}\right\rangle = s_{Q}(\boldsymbol{v})$$
defines \textit{supporting hyperplane} of $Q$ and $\boldsymbol{v}$ is called an \textit{inward pointing normal}. The intersection of $Q$ with the supporting hyperplane in the direction $\boldsymbol{v}\in\mathbb{R}^n$ is denoted by 

\begin{equation}\label{supporting hyperplane}Q^{\boldsymbol{v}}:=\{\boldsymbol{q}\in Q: \left\langle \boldsymbol{q},\boldsymbol{v}\right\rangle = s_{Q}(\boldsymbol{v}) \}.\end{equation}
The set $Q^{\boldsymbol{v}}$ is called the \textit{face} of $Q$ determined by $\boldsymbol{v}$. If $Q^{\boldsymbol{v}}$ has codimension 1, it is called a \textit{facet} of $Q$.

\subsection{Resultant of polynomial systems}
	\subsubsection{Multipolynomial Resultant}
	We consider homogenous polynomials of degree $d_i\geq 0$ of the form
	$$\textbf{F}_i(t_0,\ldots,t_n)=\sum_{|J|=d_i}u_{i,J}\boldsymbol{t}^{J}$$
for $i=0,\ldots,n$ where $J$ is a multi-index $(j_0,\ldots,j_n)$ and  $\boldsymbol{t}^{J}:=t_0^{j_0}\cdots t_n^{j_n}$ is the monomial of degree $|J|=\sum_{i=0}^{n}j_i$. The set of such polynomials form an affine space by identifying $\sum_{|J|=d_i}u_{i,J}\boldsymbol{t}^{J}$ with the point $\boldsymbol{u}_i:=(u_{i,J})_{|J|=d_i}\in\mathbb{C}^{N(d_i)}$, where $N(d_i)=\binom{n+d_i-1}{n-1}$. Letting $N:=\sum_{i=0}^{n}N(d_i)$, recall that the \textit{incidence variety} is defined by
	
			\begin{equation*}
		\mathcal{W}=\left\lbrace  (\boldsymbol{u},\boldsymbol{t})\in   \mathbb{C} ^N\times\mathbb{P}(\Bbb{C}^n) : F_0(\boldsymbol{u}_0,\boldsymbol{t})=\dots=F_n(\boldsymbol{u}_n,\boldsymbol{t})=0  \right\rbrace.
	\end{equation*}
 We also let $\pi:\mathbb{C} ^N\times\mathbb{P}(\Bbb{C}^n)\rightarrow \mathbb{C} ^N$ be the projection onto first coordinate where $\mathbb{P}(\Bbb{C}^n)$ denotes the complex projective space. Then by Projective Extension Theorem (see eg. \cite{Cox}) the image $\pi(\mathcal{W})$ forms a variety in the affine space $\mathbb{C}^N$.
	\begin{defn} The \textit{multipolynomial resultant} $Res_{d_0,\ldots,d_n}$ is defined as the irreducible unique (up to a sign) polynomial in $\mathbb{Z}[\boldsymbol{u}_0,\ldots,\boldsymbol{u}_n]$ which is the defining equation of the variety $\pi(\mathcal{W})$. The resultant of the homogeneous polynomials $F_0,\ldots, F_n$ is the evaluation of  $Res_{d_0,\ldots,d_n}$ at the coefficients of $F_0,\ldots, F_n$ and it is denoted by $Res_{d_0,\ldots,d_n}(F_0,\ldots,F_n)$. 
		\end{defn}
Note that if $d_0=\ldots=d_n=1$, then the evaluation of multipolynomial resultant $Res_{d_0,\ldots,d_n}$ at the coefficients of $F_0,\ldots,F_n$ is  the determinant of the coefficient matrix. 
	
	\begin{thm}[\cite{GKZ},\cite{Cox}]\label{th:hr}
		Let $F_0,\ldots,F_n\in\mathbb{C}[t_0,\ldots,t_n]$ be homogenous polynomials of positive total degrees $d_0,\ldots,d_n$. Then the system   $F_0=\ldots=F_n=0$ has a  solution in the complex projective space $\mathbb{P}^n$ if and only if $\text{Res}_{d_0\ldots,d_n}(F_0,\ldots,F_n)=0$. 
	\end{thm}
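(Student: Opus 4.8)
The plan is to derive Theorem \ref{th:exp} as a soft consequence of Theorem \ref{th:main} and Corollary \ref{cor:eq}, with essentially no new analytic input. Fix a continuous test function $\varphi$ with compact support in $\mathbb{C}^n$; it suffices to prove that
$$d^{-n}\bigl\langle \mathbb{E}[\widetilde{Z}(\boldsymbol{f}_d)], \varphi\bigr\rangle \longrightarrow \langle \nu_{\text{Haar}}, \varphi\rangle \qquad\text{as } d\to\infty.$$
The first step is a bookkeeping identity: for $\boldsymbol{f}_d\in Poly_{n,d}\setminus\mathcal{E}_{n,d}$, Theorem \ref{th:main} gives $\#Z(\boldsymbol{f}_d)=d^n$, so the normalized empirical measure satisfies $\delta_{Z(\boldsymbol{f}_d)}=d^{-n}\sum_{\xi_i\in Z(\boldsymbol{f}_d)}\delta_{\xi_i}$, and therefore
$$d^{-n}\bigl\langle \mathbb{E}[\widetilde{Z}(\boldsymbol{f}_d)], \varphi\bigr\rangle = \int_{Poly_{n,d}\setminus\mathcal{E}_{n,d}} \bigl\langle \delta_{Z(\boldsymbol{f}_d)}, \varphi\bigr\rangle \, dProb_d(\boldsymbol{f}_d).$$

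The second step upgrades the deterministic equidistribution of Corollary \ref{cor:eq} to a uniform estimate. I claim that
$$\varepsilon_d(\varphi):=\sup_{\boldsymbol{f}_d\in Poly_{n,d}\setminus\mathcal{E}_{n,d}}\bigl|\langle \delta_{Z(\boldsymbol{f}_d)}, \varphi\rangle - \langle \nu_{\text{Haar}}, \varphi\rangle\bigr|\longrightarrow 0.$$
Indeed, if along some subsequence $d_k\to\infty$ one had $\varepsilon_{d_k}(\varphi)\geq\varepsilon>0$, one picks near-maximizers $\boldsymbol{g}_{d_k}\in Poly_{n,d_k}\setminus\mathcal{E}_{n,d_k}$, completes them to a full sequence $\boldsymbol{f}_d\in Poly_{n,d}\setminus\mathcal{E}_{n,d}$ by choosing arbitrary elements for the remaining indices (possible once $K/d<1$, so that the complement of the exceptional set is nonempty), and applies Corollary \ref{cor:eq} to reach a contradiction. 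Throughout, $\delta_{Z(\boldsymbol{f}_d)}$ is a probability measure, so $|\langle\delta_{Z(\boldsymbol{f}_d)},\varphi\rangle|\leq\|\varphi\|_\infty$ and all quantities are bounded.

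The third step combines the two. Splitting $\langle\delta_{Z(\boldsymbol{f}_d)},\varphi\rangle=\langle\nu_{\text{Haar}},\varphi\rangle+\bigl(\langle\delta_{Z(\boldsymbol{f}_d)},\varphi\rangle-\langle\nu_{\text{Haar}},\varphi\rangle\bigr)$ inside the integral above and using $Prob_d(\mathcal{E}_{n,d})\leq K/d$ from Theorem \ref{th:main}, one obtains
$$\Bigl| d^{-n}\bigl\langle \mathbb{E}[\widetilde{Z}(\boldsymbol{f}_d)], \varphi\bigr\rangle - \langle \nu_{\text{Haar}}, \varphi\rangle\Bigr| \leq \varepsilon_d(\varphi) + \frac{K}{d}\,\|\varphi\|_\infty,$$
and the right-hand side tends to $0$. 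As this holds for every continuous compactly supported $\varphi$ and $\nu_{\text{Haar}}$ is supported on the compact torus $(S^1)^n$, no mass escapes to infinity and this is precisely the asserted weak convergence $d^{-n}\mathbb{E}[\widetilde{Z}(\boldsymbol{f}_d)]\to\nu_{\text{Haar}}$. I do not anticipate a genuine obstacle here: all the probabilistic and algebro-geometric content is already packaged in Theorem \ref{th:main} (the $O(1/d)$ control on the exceptional set) and Corollary \ref{cor:eq} (deterministic equidistribution off that set); the only points requiring mild care are the sub-sequence argument for $\varepsilon_d(\varphi)\to 0$ and the observation that $Poly_{n,d}\setminus\mathcal{E}_{n,d}$ is nonempty for all large $d$, so that the splicing step is legitimate.
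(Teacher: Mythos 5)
Your proposal does not address the statement at hand. The statement is Theorem \ref{th:hr}, the classical characterization that the homogeneous system $F_0=\cdots=F_n=0$ has a solution in $\mathbb{P}^n$ if and only if $\mathrm{Res}_{d_0,\ldots,d_n}(F_0,\ldots,F_n)=0$. What you have written is instead an argument for Theorem \ref{th:exp} (convergence of the normalized expected zero measure to $\nu_{\text{Haar}}$), deduced from Theorem \ref{th:main} and Corollary \ref{cor:eq}. Nothing in your text engages with the object the statement is about: the incidence variety $\mathcal{W}\subset\mathbb{C}^N\times\mathbb{P}(\mathbb{C}^n)$, the projection $\pi$, or the resultant as the defining equation of the hypersurface $\pi(\mathcal{W})$. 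A proof of Theorem \ref{th:hr} would have to show, via elimination theory, that $\pi(\mathcal{W})$ is Zariski closed (properness of the projection along the $\mathbb{P}^n$ factor, i.e.\ the Projective Extension Theorem), that it is an irreducible hypersurface in $\mathbb{C}^N$ (a dimension count on $\mathcal{W}$, which is a projective bundle over the zero locus in $\mathbb{P}^n$), and that membership of the coefficient vector in $\pi(\mathcal{W})$ is by construction equivalent to the existence of a projective common zero. The paper itself does not reprove this: it is quoted as a classical result from \cite{GKZ} and \cite{Cox}, and the definition of $\mathrm{Res}_{d_0,\ldots,d_n}$ in the paper is set up precisely so that the theorem follows from that elimination-theoretic framework.

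As a secondary remark, even read as a proof of Theorem \ref{th:exp}, your route differs from the paper's: the paper controls $\nu_d$ quantitatively through the Erd\"os--Tur\'an size and the angle/radius discrepancy bounds of \cite{DGS} (Proposition \ref{prop}), testing against sets of the form (\ref{set:open}), whereas you attempt a soft compactness/uniformization argument on top of Corollary \ref{cor:eq}. That by itself is a legitimate alternative strategy for Theorem \ref{th:exp}, but it is not a proof of Theorem \ref{th:hr}, which is the statement you were asked to prove; the gap is not a missing step but a wholesale mismatch of target.
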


	Theorem \ref{th:hr} gives a characterization to determine the existence of nontrivial solutions for the systems of homogenous polynomials based on the coefficients of the polynomials in the system. However, not all the systems of equations are homogenous, and in the power series expansions not all the monomial terms appear. Hence, we need to introduce a more general version of the multi-homogenous resultant.
	
	\subsubsection{Sparse Eliminant} Following \cite{GKZ}, we will recall the definition of sparse resultant. Let $A_0,\ldots,A_n$ be a collection of non-empty finite subsets of $\mathbb{Z}^n$, and let $\boldsymbol{u}_i:=\{u_{i,J}\}_{J\in A_i} $ be a group of $\#A_i$ variables, $i=0,\ldots,n$ and set $\boldsymbol{u}=\{\boldsymbol{u}_0,\ldots,\boldsymbol{u}_n\}$ . For each $i$,  the general Laurent polynomial $f_i$  with support $A_i:=supp(f_i)$ is given by
	\begin{equation*}
		f_i(\boldsymbol{u}_i, \boldsymbol{x})=\sum_{J\in A_i}u_{i,J}\boldsymbol{x}^{J}\in\mathbb{C}[\boldsymbol{u}][x_1^{\pm1},\ldots,x_n^{\pm1}].
	\end{equation*}	
We let $\mathcal{A}=(A_0,\ldots,A_n)$ and consider the incidence variety in this setting defined by
	\begin{equation}
		W_{\mathcal{A}}=\left\lbrace  (\boldsymbol{u},\boldsymbol{x})\in  \prod_{i=0}^n\mathbb{P}(\mathbb{C}^{N_i })\times\left( \mathbb{C}^*\right) ^n : f_0(\boldsymbol{u}_1,\boldsymbol{x})=\dots=f_n(\boldsymbol{u}_n,\boldsymbol{x})=0  \right\rbrace
	\end{equation}
	where $N_i=\#A_i$. Next, we consider the canonical projection on the first coordinate $$\pi_{\mathcal{A}}: \prod_{i=0}^n\mathbb{P}(\mathbb{C}^{N_i })\times\left( \mathbb{C}^*\right) ^n \to \prod_{i=0}^n\mathbb{P}(\mathbb{C}^{N_i})$$ and let $\overline{\pi_{\mathcal{A}}(W_{\mathcal{A}})}$ denote the Zariski closure of  $W_{\mathcal{A}}$ under the projection $\pi_\mathcal{A}$.  
\begin{defn}\label{def:sparse}
	The sparse eliminant, denoted by $\text{Res}_{\mathcal{A}}$, is defined as follows: if the variety $\overline{\pi_{\mathcal{A}}(W_{\mathcal{A}})}$  has codimension 1, then the \textit{sparse eliminant} is the unique (up to sign) irreducible polynomial in $\mathbb{Z}[\boldsymbol{u}]$ which is the defining equation of $\overline{\pi_{\mathcal{A}}(W_{\mathcal{A}})}$. If $codim(\overline{\pi_{\mathcal{A}}(W_{\mathcal{A}})})\geq2$, then $\text{Res}_{\mathcal{A}}$ is defined to be the constant polynomial 1. The expression 
	$$\text{Res}_{\mathcal{A}}(f_0,\ldots,f_n)$$
	is the evaluation of $\text{Res}_{\mathcal{A}}$ at the coefficients of $f_0,\ldots,f_n$.
\end{defn}
	
	\begin{example}
		For $A_0=\left\lbrace 0\right\rbrace , A_1=\left\lbrace 0,1\right\rbrace \subset\mathbb{Z}$, we have that $\text{Res}_{\mathcal{A}}(\boldsymbol{u})=\pm u_{00}$ where $\mathcal{A}=(A_0,A_1)$.
	\end{example}
	
	
The classical resultant $Res_{d_0,\ldots,d_n}$ is the special case of the sparse eliminant $\text{Res}_{\mathcal{A}}$. Indeed, by  letting $A_i$ be the set of all integer points in the $d_i$-simplex, i.e., $A_i=d_i\Sigma_n\cap \mathbb{Z}^n$ and $\Sigma_n$ be the standard unit simplex
$$d_i\Sigma_n:=\{(a_0,\ldots,a_n)\in\mathbb{R}^{n+1}: a_j\geq 0\ \text{and}\ \sum_{j}a_j\leq d_i \}$$
one recovers $\text{Res}_{\mathcal{A}}=Res_{d_0,\ldots,d_n}$ up to a sign. Indeed, following \cite{Cox} and \cite{GKZ} for simplicity we let all the sparse polynomials $f_0,\ldots,f_n$ have the same support $A_i=d\Sigma_n\cap\mathbb{Z}^n$ for some positive integer $d$ and consider the system
\begin{equation}\label{eq:ssystem}
	\left\lbrace 
	\begin{array}{ll}
		f_0 = u_{01}\boldsymbol{x}^{\alpha_1}+\ldots+u_{0d}\boldsymbol{x}^{\alpha_n}=0\\

		\vdots\\
		f_n = u_{n1}\boldsymbol{x}^{\alpha_1}+\ldots+u_{nd}\boldsymbol{x}^{\alpha_n}=0\\

	\end{array} 
	\right. 
\end{equation}
We also let $t_0,\ldots,t_n$ be the homogenous coordinates which are related to $x_1,\ldots,x_n$ by $x_i=t_i/t_0$. Then we define the homogenous polynomials
	\begin{equation}\label{eq:hom}
		F_i(t_0,\ldots,t_n)=t_0^df_i(t_1/t_0,\ldots,t_n/t_0)=t_0^df_i(x_1,\ldots,x_n),
	\end{equation}
	for $0\leq i\leq n$. This gives n+1 homogenous polynomials of total degree $d$ in the variables $t_0,\ldots,t_n$ and this procedure is independent of the choice of homogeneous coordinates.

\begin{proposition}[\cite{Cox}] Let  $A_i:=d\Sigma_n\cap\mathbb{Z}^n$ for each $i=1,\ldots,n$ and consider the systems of polynomials $\boldsymbol{F}$ and $\boldsymbol{f}$ as above. Then

	$$Res_{\mathcal{A}_d}(f_0,\ldots,f_n)=\pm Res_{d,\ldots,d}(F_0,\ldots, F_n),$$
	where $\mathcal{A}_d:=(A_1,\ldots,A_n).$
\end{proposition}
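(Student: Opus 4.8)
The plan is to view $Res_{\mathcal{A}_d}$ and $Res_{d,\ldots,d}$ as polynomials in the \emph{same} coefficient variables, to check that each is an irreducible (hence primitive) element of $\mathbb{Z}[\boldsymbol{u}]$ whose zero locus is one and the same irreducible hypersurface, and to conclude that the two coincide up to sign. The first step is the dictionary between the coefficient spaces: homogenizing as in \eqref{eq:hom}, a lattice point $J=(j_1,\ldots,j_n)$ of $A_i=d\Sigma_n\cap\mathbb{Z}^n$ corresponds to the degree-$d$ monomial $t_0^{\,d-|J|}t_1^{j_1}\cdots t_n^{j_n}$ of $F_i$, and $J\mapsto(d-|J|,j_1,\ldots,j_n)$ is a bijection between the $\binom{n+d}{n}$ lattice points of the $d$-simplex and the degree-$d$ monomials in $t_0,\ldots,t_n$. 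Hence $u_{i,J}$ is at once the coefficient of $\boldsymbol{x}^J$ in $f_i$ and of the matching monomial in $F_i$, so both $Res_{\mathcal{A}_d}$ and $Res_{d,\ldots,d}$ lie in $\mathbb{Z}[\boldsymbol{u}]$, each homogeneous separately in the blocks $\boldsymbol{u}_0,\ldots,\boldsymbol{u}_n$.

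The second step compares the two incidence varieties. If $\boldsymbol{x}\in(\mathbb{C}^*)^n$ is a common zero of $f_0,\ldots,f_n$, then $[1:x_1:\cdots:x_n]\in\mathbb{P}^n$ is a common zero of $F_0,\ldots,F_n$ since $F_i(t_0,\ldots,t_n)=t_0^d f_i(t_1/t_0,\ldots,t_n/t_0)$; hence $\pi_{\mathcal{A}}(W_{\mathcal{A}})\subseteq\pi(\mathcal{W})$, and as $\pi(\mathcal{W})$ is Zariski closed by the Projective Extension Theorem, $\overline{\pi_{\mathcal{A}}(W_{\mathcal{A}})}\subseteq\pi(\mathcal{W})$. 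For the reverse inclusion I would argue that $\pi_{\mathcal{A}}(W_{\mathcal{A}})$ is dense in $\pi(\mathcal{W})$: for each coordinate hyperplane $H_j=\{t_j=0\}$ of $\mathbb{P}^n$, the set of coefficient tuples for which $F_0,\ldots,F_n$ have a common zero on $H_j$ is Zariski closed (the image of a closed set under projection from the complete $H_j$), is contained in $\pi(\mathcal{W})$, and is a \emph{proper} subset of it, since one can exhibit a degree-$d$ system with a common zero in $\mathbb{P}^n$ but none on any $H_j$ (for instance $F_i=t_{i+1}^d-t_{i+2}^d$ for $0\le i\le n-1$, indices mod $n+1$, together with $F_n=t_1^d+\cdots+t_n^d-n\,t_0^d$, whose common zeros are the $d^n$ points $[1:\zeta_1:\cdots:\zeta_n]$ with $\zeta_k^d=1$). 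Since $\pi(\mathcal{W})=V(Res_{d,\ldots,d})$ is irreducible of codimension $1$, these finitely many proper closed subvarieties have codimension $\geq 2$, so deleting them leaves a dense subset of $\pi(\mathcal{W})$ over which the common projective zero of $F_0,\ldots,F_n$ may be taken in $(\mathbb{C}^*)^n$, hence inside $\pi_{\mathcal{A}}(W_{\mathcal{A}})$. Therefore $\overline{\pi_{\mathcal{A}}(W_{\mathcal{A}})}=\pi(\mathcal{W})$. (Alternatively, one verifies $\overline{\pi_{\mathcal{A}}(W_{\mathcal{A}})}$ has codimension exactly $1$ from a dimension count on the fibration $W_{\mathcal{A}}\to(\mathbb{C}^*)^n$ together with Bernstein's theorem, the number of torus solutions of a generic $n$-element subsystem being $MV_n(d\Sigma_n,\ldots,d\Sigma_n)=d^n$.)

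With the two hypersurfaces identified, the conclusion follows: by Definition \ref{def:sparse}, $Res_{\mathcal{A}_d}$ is the irreducible defining equation of $\overline{\pi_{\mathcal{A}}(W_{\mathcal{A}})}=\pi(\mathcal{W})$, which is also defined by the irreducible polynomial $Res_{d,\ldots,d}$; two irreducible, primitive elements of $\mathbb{Z}[\boldsymbol{u}]$ with the same zero locus are associates in $\mathbb{Q}[\boldsymbol{u}]$ by the Nullstellensatz, and comparing contents forces the constant of proportionality to be $\pm 1$. Evaluating at the coefficients of $f_0,\ldots,f_n$ (equivalently of $F_0,\ldots,F_n$) then gives the asserted identity $Res_{\mathcal{A}_d}(f_0,\ldots,f_n)=\pm\,Res_{d,\ldots,d}(F_0,\ldots,F_n)$. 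The main obstacle is the reverse inclusion, i.e.\ showing that enlarging the torus $(\mathbb{C}^*)^n$ to all of $\mathbb{P}^n$ does not change the hypersurface — equivalently, excluding the degenerate possibility $Res_{\mathcal{A}_d}\equiv 1$ — and this is exactly where the special combinatorics of the full $d$-simplex (whose associated toric variety is $\mathbb{P}^n$ itself) enters.
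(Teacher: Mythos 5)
The paper does not actually prove this proposition; it is stated with a citation to \cite{Cox} and used as a black box, so there is no in-text argument to compare yours against. Your proof is nonetheless a correct, self-contained derivation from the paper's own Definition of the sparse eliminant and of $Res_{d_0,\ldots,d_n}$: the dictionary $J\mapsto(d-|J|,j_1,\ldots,j_n)$ identifies the two coefficient rings, the forward inclusion $\pi_{\mathcal{A}}(W_{\mathcal{A}})\subseteq\pi(\mathcal{W})$ is immediate from $F_i(1,\boldsymbol{x})=f_i(\boldsymbol{x})$, and the reverse inclusion is precisely the content one must verify. Your density argument is sound: each $S_j=\{\boldsymbol{u}:\exists\ \text{common zero on }H_j\}$ is closed (image of a complete family), lies inside $\pi(\mathcal{W})$, and is proper in it, so irreducibility of $\pi(\mathcal{W})$ gives density of the complement, and over that complement every common zero is in $(\mathbb{C}^*)^n$. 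The Nullstellensatz-plus-Gauss step at the end is fine. Two small remarks. First, the explicit example (which does check out: its common zeros are exactly the $d^n$ torus points $[1:\zeta_1:\cdots:\zeta_n]$, $\zeta_k^d=1$, and $F_n$ is redundant but harmless) can be replaced by a one-line dimension count: fibering $\mathcal{W}\cap(\mathbb{C}^N\times H_j)$ over $H_j\cong\mathbb{P}^{n-1}$ gives $\dim S_j\leq (n-1)+\sum_i(N(d_i)-1)=N-2<N-1=\dim\pi(\mathcal{W})$, so $S_j$ is automatically proper with no example needed. Second, your alternative route (fibration over $(\mathbb{C}^*)^n$ plus Bernstein to get codimension exactly one) is essentially the argument the cited reference \cite{Cox} runs; the reference also observes, as you note in closing, that the conceptual reason is that the toric variety of the $d$-simplex is $\mathbb{P}^n$ itself, which is what makes the torus incidence variety and the projective one agree after closure. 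So your proof is a valid and somewhat more elementary version of the standard one.
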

	

	Using the above proposition, we can give a version of Theorem \ref{th:hr} as follows.

	
	\begin{cor}\label{cor:res} Let $\boldsymbol{f}=(f_1,\ldots,f_n)$ be a system of  polynomials with $A_i=d\Sigma_n\cap\mathbb{Z}^n$ for $i=1,\ldots,n$. Assume that the system $\boldsymbol{F}=(F_0,\ldots,F_n)$ consists the homogenizations of $f_i$ according to process in (\ref{eq:hom}) and denote the set of  simultaneous nonzero solutions of $\boldsymbol{F}$ by $Z(\boldsymbol{F})$. Suppose that $Z(\boldsymbol{F})\cap H^{\infty}(t_0)=\emptyset$ where $H^{\infty}(t_0)$ is the hyperplane at infinity for $t_0=0$. Then the system of  polynomials $\boldsymbol{f}=0$ has no solution if and only if $\text{Res}_{\mathcal{A}_d}(f_0,\ldots,f_n)\neq 0$ 	where $\mathcal{A}_d:=(A_1,\ldots,A_n).$
	\end{cor}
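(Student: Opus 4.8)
The corollary is essentially a repackaging of the Proposition immediately above together with Theorem \ref{th:hr}. The plan is to prove the contrapositive, namely that $\boldsymbol{f}=0$ has a solution in the affine space $\mathbb{C}^n$ if and only if $\text{Res}_{\mathcal{A}_d}(f_0,\ldots,f_n)=0$, by chaining three equivalences: (i) $\boldsymbol{f}=0$ is solvable in $\mathbb{C}^n$ $\Longleftrightarrow$ $Z(\boldsymbol{F})\neq\emptyset$; (ii) $Z(\boldsymbol{F})\neq\emptyset$ $\Longleftrightarrow$ $Res_{d,\ldots,d}(F_0,\ldots,F_n)=0$; (iii) $Res_{d,\ldots,d}(F_0,\ldots,F_n)=0$ $\Longleftrightarrow$ $\text{Res}_{\mathcal{A}_d}(f_0,\ldots,f_n)=0$.

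Equivalence (iii) is immediate from the Proposition above applied to the supports $A_i=d\Sigma_n\cap\mathbb{Z}^n$, which gives $\text{Res}_{\mathcal{A}_d}(f_0,\ldots,f_n)=\pm\,Res_{d,\ldots,d}(F_0,\ldots,F_n)$, so the two numbers vanish simultaneously; in passing, this also records that $\text{Res}_{\mathcal{A}_d}$ is not the degenerate constant polynomial $1$ of Definition \ref{def:sparse}, the classical resultant being nonconstant and irreducible. Equivalence (ii) is a direct application of Theorem \ref{th:hr}: each $F_i$ is homogeneous of positive total degree $d$ by the construction \eqref{eq:hom}, hence $Res_{d,\ldots,d}(F_0,\ldots,F_n)=0$ precisely when the homogeneous system $F_0=\cdots=F_n=0$ admits a nonzero, i.e.\ projective, solution, which is the statement $Z(\boldsymbol{F})\neq\emptyset$.

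Equivalence (i) is where the hypothesis on the hyperplane at infinity is used. Split $\mathbb{P}^n=\{t_0\neq0\}\sqcup H^{\infty}(t_0)$. Because $Z(\boldsymbol{F})\cap H^{\infty}(t_0)=\emptyset$, every point of $Z(\boldsymbol{F})$ lies in the chart $\{t_0\neq0\}$; dehomogenizing via $x_i=t_i/t_0$ and using the identity $F_i(t_0,\ldots,t_n)=t_0^{d}f_i(t_1/t_0,\ldots,t_n/t_0)$ from \eqref{eq:hom}, such a point yields a common affine zero $(x_1,\ldots,x_n)\in\mathbb{C}^n$ of $f_0,\ldots,f_n$. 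Conversely, a common affine zero $(x_1,\ldots,x_n)$ gives the projective point $[1:x_1:\cdots:x_n]$, which lies in $Z(\boldsymbol{F})$ by the same identity and automatically avoids $H^{\infty}(t_0)$. Hence $Z(\boldsymbol{F})\neq\emptyset$ exactly when $\boldsymbol{f}=0$ is solvable in $\mathbb{C}^n$, and concatenating (i)--(iii) completes the proof. I expect no genuine obstacle here: the whole argument is bookkeeping around the cited results, and the only delicate point is that the no-zeros-at-infinity hypothesis is exactly the condition making the classical projective resultant detect affine, rather than merely projective, solvability.
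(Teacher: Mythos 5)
Your proof is correct and follows essentially the same route as the paper: convert the sparse resultant to the classical multipolynomial resultant via the Proposition, apply Theorem \ref{th:hr} to detect projective solvability, and use the no-zeros-at-infinity hypothesis to identify projective with affine solvability. Your chain of equivalences is in fact slightly tighter than the printed argument, whose forward direction leans on ``by definition of the sparse resultant''---loose, since Definition \ref{def:sparse} builds the sparse eliminant from the incidence variety over $(\mathbb{C}^*)^n$ and so a priori only rules out torus solutions, whereas your detour through the classical resultant correctly rules out solutions in all of $\mathbb{C}^n$.
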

	
	\begin{proof} 	If $\text{Res}_{\mathcal{A}_d}(f_0,\ldots,f_n)\neq 0$, then by definition of the sparse resultant the system
		$$f_0(x)=\ldots=f_n(x)=0$$
		has no solution. Conversely, letting $F_i$ be the homogenization of $f_i$ as in (\ref{eq:hom}) with the corresponding variable $\boldsymbol{t}=(t_0,\ldots,t_n)$, i.e. $F_i(\boldsymbol{t})=t_0^df_i(\boldsymbol{x})$. If the system of  polynomials $\boldsymbol{f}=0$ has no solution then $F_i(\boldsymbol{t})=0$ for $i=1,\dots,n$ if and only if  $t_0=0$ which contradicts our assumption. Hence, by Theorem \ref{th:hr} we have $$\pm Res_{\mathcal{A}_d}(f_0,\ldots,f_n)=\text{Res}_{d_0,\dots,d_n}(F_0,\ldots,F_n)\neq0.$$
		
	\end{proof} 
	\subsubsection{ Sparse Resultant}
	In spite of being a generalization of the multipolynomial resultant and involving considerable large amount of the system of polynomials, the sparse eliminant does not satisfy some essential properties such as additivity property and Poisson formula which are essential in many applications. More recently,  D'Andrea and Sombra \cite{DS} introduced the following version which has the desired features: 
	
	\begin{defn}
The \emph{sparse resultant}, denoted by $\mathcal{R}es_{\mathcal{A}}$, is defined as any primitive polynomial in $\mathbb{Z}[\boldsymbol{u}]$ that is the defining equation of the direct image of $\mathcal{W}_{\mathcal{A}}$ where $$(\pi_{\mathcal{A}})_*(W_{\mathcal{A}}) = \text{deg}(\pi_{\mathcal{A}}|_{\mathcal{W}_{\mathcal{A}}})\overline{\pi_{\mathcal{A}}(\mathcal{W}_{\mathcal{A}})}$$ if this variety has codimension one, and otherwise we set $\mathcal{R}es_{\mathcal{A}}\equiv1$. The expression 
		$$\mathcal{R}es_{\mathcal{A}}(f_0,\ldots,f_n)$$
		is the evaluation of $\mathcal{R}es_{\mathcal{A}}$ at the coefficients of $f_0,\ldots,f_n$.
	\end{defn}

 According to this definition, the sparse resultant is not irreducible but it is a power of the irreducible sparse eliminant, i.e.,
	
	\begin{equation*}
		\mathcal{R}es_{\mathcal{A}}=\pm \text{Res}_{\mathcal{A}}^{\text{deg}(\pi_{\mathcal{A}}|_{W_{\mathcal{A}}})}
	\end{equation*}
	
	\noindent where $\text{deg}(\pi_{\mathcal{A}}|_{W_{\mathcal{A}}})$ is the degree of the projection $\pi_{\mathcal{A}}$. We also remark that $\mathcal{R}es_{\mathcal{A}}\not\equiv1$ whenever $\text{Res}_{\mathcal{A}}\not\equiv1$. 
	
	\begin{example}
		Let $A_0=A_1=A_2=\{(0,0),(2,0),(0,2)\}$. Then $\text{Res}_{\mathcal{A}}= \det(u_{i,j})$ and $\mathcal{R}es_{\mathcal{A}}=\pm [\det(u_{i,j})]^4$.
	\end{example}	
	For the detailed account of the subject we refer the reader to the manuscripts \cite{DS} and \cite{DGS}.
	
\subsubsection{Directional Resultant} For a finite subset $A\subset\mathbb{Z}^n$ and a non-zero vector $\boldsymbol{v}\in\mathbb{Z}^n$ we denote 
$$\mathcal{A}^{\boldsymbol{v}}:=\left\lbrace J\in A: \left\langle J,\boldsymbol{v}\right\rangle = s_{Q}(\boldsymbol{v}) \right\rbrace $$
where $Q=conv(A)$ and  $s_{Q}(\boldsymbol{v})$ as in the equation (\ref{support function}). 
For a Laurent polynomial $f(x)=\sum_{J\in A}u_{J}\boldsymbol{x}^{J}$ with support $supp(f)=A$ we also define the directed polynomial
$$f^{\boldsymbol{v}}(x):=\sum_{J\in\mathcal{A}^{\boldsymbol{v}}}u_{J}\boldsymbol{x}^{J}.$$

	\begin{defn}{\label{def:directional}}
		Let $A_1,\ldots,A_n\subset\mathbb{Z}^n$ be a family of $n$ non-empty finite subsets,  $\boldsymbol{v}\in\mathbb{Z}^n\setminus\left\lbrace \boldsymbol{0}\right\rbrace$, and $\boldsymbol{v}^\perp\subset\mathbb{R}^n$ the orthogonal subspace. Then there exists $\boldsymbol{b}_{i,\boldsymbol{v}}\in\mathbb{Z}^n$ such that 
		$$A_i^{\boldsymbol{v}}-\boldsymbol{b}_{i,\boldsymbol{v}}\subset\mathbb{Z}^n\cap\boldsymbol{v}^\perp$$
		 for $i=1,\ldots,n$. The resultant of $A_1,\ldots,A_n$ in the direction of $\boldsymbol{v}$, denoted by $\mathcal{R}es_{\mathcal{A}^{\boldsymbol{v}}}$ is defined as the sparse resultant of the family of the finite subsets $A_i^{\boldsymbol{v}}-\boldsymbol{b}_{i,\boldsymbol{v}}$ for $i\in\{1,\ldots,n\}$.
		
		Given a collection $f_i\in\mathbb{C}[x_1^{\pm1},\ldots,x_n^{\pm1}]$ of Laurent polynomials with support $supp(f_i)\subset A_i$ for $i=1,\ldots,n$ we write $f_i^{\boldsymbol{v}}=\boldsymbol{x}^{\boldsymbol{b}_{i,\boldsymbol{v}}}g_{i,\boldsymbol{v}}$ where $g_{i,\boldsymbol{v}}\in\mathbb{C}[\mathbb{Z}^n\cap\boldsymbol{v}^{\perp}]\simeq\mathbb{C}[y_1^{\pm1},\ldots,y_{n-1}^{\pm1}]$ is a Laurent polynomial with $supp(g_{i,\boldsymbol{v}})\subset A_i^{\boldsymbol{v}}-\boldsymbol{b}_{i,\boldsymbol{v}}$. The expression
		$$\mathcal{R}es_{\mathcal{A}^{\boldsymbol{v}}}(f_1^{\boldsymbol{v}},\ldots,f_n^{\boldsymbol{v}})$$
		is defined as the evaluation of the resultant $\mathcal{R}es_{\mathcal{A}^{\boldsymbol{v}}}$ at the coefficients of the $g_{i,\boldsymbol{v}}$. 
	\end{defn}
We remark that the definition of directional resultant is independent of the choice of the vector $\boldsymbol{b}_{i,\boldsymbol{v}}$ (see \cite[Proposition 3.3]{DS}). Moreover, the directional resultant $\mathcal{R}es_{\mathcal{A}^{\boldsymbol{v}}} \not \equiv1$ only if the direction vector $\boldsymbol{v}$ is an inward pointing normal to a facet of the Minkowski sum $\sum_{i=1}^n conv(A_i)$ (cf. \cite[Proposition 3.8]{DS}). Therefore, for a family of subsets $A_1,\ldots,A_n\subset \Bbb{Z}^n$ there are only finitely many directions $\boldsymbol{v}\in\mathbb{Z}^n\setminus\left\lbrace \boldsymbol{0}\right\rbrace$ for which the directional resultant can vanish.

\begin{example} Let $f(x)=a_0+\ldots+a_nx^n\in\mathbb{C}[x]$ be a polynomial of degree $n$. Then the nontrivial directional resultants are 
	\begin{equation*}
		\mathcal{R}es_{A}(f^{\boldsymbol{v}})= \left\lbrace 
		\begin{array}{ll}
			\pm a_0& \text{if}\quad\boldsymbol{v}=1, \\

			\pm a_n & \text{if}\quad \boldsymbol{v}=-1 \\
		\end{array} 
		\right. 
	\end{equation*}
	
	\noindent for the polytope $conv(A)=[0,n]\subset\mathbb{R}$.
	\end{example} 

In the last part of this section we review Bernstein's Theorem about the number of the common solutions for Laurent polynomial systems and its relation to the directional resultant. The classical B\'{e}zout's Theorem states  that for $n$ polynomials $g_1,\ldots,g_n\in \mathbb{C}[x_1,\ldots,x_n]$ of (positive) degrees $d_1,\ldots,d_n$ the system $$g_1(x_1,\dots, x_n) = \cdots=g_n(x_1,\dots, x_n)=0$$ has either infinite number of solutions or the number of the number of complex roots cannot exceed $d_1\ldots d_n$. Moreover, if the solutions in the hyperplane at infinity are counted with multiplicity, the exact number of solutions in the complex projective space $\mathbb{P}^{n}$ is $d_1\cdots d_n$ (see e.g. \cite{Cox}). A generalization of this result to the context of Laurent polynomials was obtained by Bernstein \cite{Ber} (see also Kushnirenko \cite{Kush}). More precisely, we have the following:

\begin{thm}[\cite{Ber}]\label{BerTh}
		Let $\boldsymbol{f}=(f_1,\ldots,f_n)$ be a system of Laurent polynomials with \linebreak support $supp(f_i)=A_i\subset\Bbb{Z}^n$ for $i=1,\ldots,n$. If for any nonzero vector $\boldsymbol{v}\in\Bbb{Z}^n$ the directed system $\boldsymbol{f}^{\boldsymbol{v}}=(f_{1}^{\boldsymbol{v}},\ldots,f_n^{\boldsymbol{v}})$ has no common zeros in $(\mathbb{C}^*)^n$  then the set of solutions of the system $\boldsymbol{f}=0$ are isolated and the exact number of the solutions is $\#Z(\boldsymbol{f})=MV_{n}(Q_1,\ldots,Q_n)$  where $Q_i=conv(A_i)$ for $i=1,\ldots,n.$
		\end{thm}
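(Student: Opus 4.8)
The plan is to prove Bernstein's theorem by passing to a toric compactification adapted to the Newton polytopes $Q_i=conv(A_i)$. First I would fix a complete rational fan $\Sigma$ in $\mathbb{R}^n$ that simultaneously refines the inner normal fans of $Q_1,\ldots,Q_n$ and is smooth (such a $\Sigma$ exists after a toric resolution of singularities), and let $X_\Sigma$ be the associated smooth complete toric variety. It contains the torus $T=(\mathbb{C}^*)^n$ as a dense open orbit, and $X_\Sigma\setminus T$ is the union of the orbit closures attached to the nonzero cones of $\Sigma$. Since $\Sigma$ refines the normal fan of each $Q_i$, the polytope $Q_i$ determines a globally generated line bundle $L_i$ on $X_\Sigma$ with $H^0(X_\Sigma,L_i)=\bigoplus_{J\in Q_i\cap\mathbb{Z}^n}\mathbb{C}\cdot\boldsymbol{x}^{J}$, so that $f_i$ (which satisfies $supp(f_i)=A_i\subset Q_i\cap\mathbb{Z}^n$) defines a global section $s_i$ of $L_i$ whose zero divisor $D_i$ meets $T$ exactly in $Z(f_i)$.

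The heart of the argument is the behaviour of the $s_i$ along the boundary orbits. For a cone $\tau\in\Sigma$ of positive dimension, choose $\boldsymbol{v}\in\mathbb{Z}^n$ in the relative interior of $\tau$; the orbit $O(\tau)$ is a torus of dimension $n-\dim\tau$, and a computation in the affine toric chart $U_\sigma$ of any maximal cone $\sigma\supset\tau$ shows that, up to multiplication by a monomial that is a unit on $O(\tau)$, the restriction $s_i|_{O(\tau)}$ coincides with the directed polynomial $f_i^{\boldsymbol{v}}$, i.e.\ with the Laurent polynomial $g_{i,\boldsymbol{v}}$ of Definition \ref{def:directional}. The hypothesis that $\boldsymbol{f}^{\boldsymbol{v}}=(f_1^{\boldsymbol{v}},\ldots,f_n^{\boldsymbol{v}})$ has no common zero in $(\mathbb{C}^*)^n$ thus gives $D_1\cap\cdots\cap D_n\cap O(\tau)=\emptyset$ for every $\tau\neq\{\boldsymbol{0}\}$, and taking the union over all such $\tau$ yields $D_1\cap\cdots\cap D_n\subset T$. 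Consequently the divisors $D_1,\ldots,D_n$ intersect properly on $X_\Sigma$, their common intersection is a $0$-dimensional subscheme contained in $T$, and $Z(\boldsymbol{f})$ is a finite set of isolated points.

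It remains to count them. Since $X_\Sigma$ is smooth and complete and the $D_i$ intersect properly, the number of points of $Z(\boldsymbol{f})$ counted with intersection multiplicity equals the intersection number $L_1\cdots L_n$ in $X_\Sigma$; and the standard dictionary between toric intersection numbers and convex geometry (see e.g.\ \cite{Cox,GKZ}) identifies $L_1\cdots L_n$ with $MV_n(Q_1,\ldots,Q_n)$ --- the multilinear extension, via the polarization formula recalled in \S 2, of the single-polytope identity $L^{\,n}=n!\,Vol_n(Q)$. Hence $\#Z(\boldsymbol{f})=MV_n(Q_1,\ldots,Q_n)$, with coincident solutions counted with multiplicity; and for coefficients lying outside a proper algebraic subset the intersections are transverse, so the solutions are then pairwise distinct.

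The main obstacle is the boundary analysis of the second step: one must check carefully, in the affine charts $U_\sigma$ around the maximal cones $\sigma\supset\tau$, that the leading part of $f_i$ along the one-parameter degenerations indexed by the lattice points of $\tau$ is exactly the face part $A_i^{\boldsymbol{v}}$, and that the residual monomial factor --- governed by the translation vectors $\boldsymbol{b}_{i,\boldsymbol{v}}$ of Definition \ref{def:directional} --- restricts to a nowhere-vanishing function on $O(\tau)$. The two auxiliary points, existence of a smooth common refinement $\Sigma$ and the absence of hidden intersection multiplicity ``at infinity,'' are routine once this boundary vanishing is established. An alternative, more elementary route avoiding toric geometry is Bernstein's original deformation argument: lift the supports $A_i$ by a generic height function, follow the Puiseux-series branches of the solutions of the deformed system as the parameter tends to $0$, use the nondegeneracy hypothesis to exclude branches escaping from $T$, and count the surviving branches via the induced mixed subdivision of $Q_1+\cdots+Q_n$; there the combinatorics of mixed cells replaces the toric dictionary, but the ``no escape'' step is again the crux and again relies on the directed systems $\boldsymbol{f}^{\boldsymbol{v}}$.
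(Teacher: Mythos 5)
The paper does not actually prove Theorem \ref{BerTh}: it is quoted from \cite{Ber} (see also \cite{Kush}) and used as a black box, so there is no in-paper argument to measure yours against. Your outline is the standard toric-geometry proof of this classical result, and its skeleton is sound: take a smooth complete fan $\Sigma$ refining the normal fans of the $Q_i$, regard each $f_i$ as a section $s_i$ of the line bundle $L_i$ attached to $Q_i$ on $X_\Sigma$, check in the affine charts that on a boundary orbit $O(\tau)$ the restriction of $s_i$ is, up to a monomial unit, the face polynomial $f_i^{\boldsymbol{v}}$ for $\boldsymbol{v}$ in the relative interior of $\tau$, so the hypothesis forces $D_1\cap\cdots\cap D_n\subset T=(\mathbb{C}^*)^n$; completeness of $X_\Sigma$ together with affineness of $T$ gives finiteness, and the toric dictionary $L_1\cdots L_n=MV_n(Q_1,\ldots,Q_n)$ gives the count, consistent with the paper's normalization $MV_n(Q)=n!\,Vol_n(Q)$ (so $MV_n=d^n$ for the $d$-simplices used later). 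Two remarks. First, what this argument --- and Bernstein's theorem itself --- yields is the number of solutions counted with intersection multiplicities; the face-nondegeneracy condition alone does not make the zeros simple (already for $n=1$ the polynomial $(x-1)^2$ satisfies it), so the equality $\#Z(\boldsymbol{f})=MV_n(Q_1,\ldots,Q_n)$ must be read with multiplicities, a nuance you flag correctly and which the paper's phrasing leaves implicit. Second, as written your text is a plan rather than a complete proof: the chart computation identifying the leading behaviour of $s_i$ along $O(\tau)$ with the face $A_i^{\boldsymbol{v}}$, and the identification of the intersection number with the mixed volume, are exactly the nontrivial content and are deferred to the literature; since the paper itself only cites the theorem, this level of detail is reasonable, but be aware that those two steps carry the whole weight. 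Your alternative sketch via deformation of the supports and Puiseux-series branches is essentially Bernstein's original argument and is an equally valid route, with the ``no escape to the boundary'' step again resting on the directed systems $\boldsymbol{f}^{\boldsymbol{v}}$.
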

	In particular, for a system of Laurent polynomials $\boldsymbol{f}=(f_1,\ldots,f_n)$ if the directional resultant $\mathcal{R}es_{\mathcal{A}^{\boldsymbol{v}}}(f_1^{\boldsymbol{v}},\ldots,f_n^{\boldsymbol{v}})\not=0$ for every direction $\boldsymbol{v}\in\Bbb{Z}^n\setminus\{0\}$ then simultaneous solutions of the system $\boldsymbol{f}$ are isolated. This condition on the directional resultant holds for a generic (i.e. all except for some algebraic subset) choice of $\boldsymbol{f}$ in the space of coefficients. In the next section, we prove a probabilistic version of this result for polynomial systems with Bernoulli coefficients.


\section{Equidistribution of Zeros}
	
\subsection{Random Polynomial Systems}
First, we recall a theorem of Kozma and Zeitouni \cite{KZ} asserts that overdetermined random Bernoulli polynomial systems have no common zeros with overwhelming probability:

\begin{thm}\label{th:KZ}
	Let $f_1,\ldots,f_{n+1}\in\mathbb{Z}[x_1,\ldots,x_n]$ be $n+1$ independent random Bernoulli polynomials of degree $d$ and
	\begin{equation*}	\mathcal{P}(d,n):=Prob_d\{\exists\boldsymbol{x}\in\mathbb{C}^n: f_i(\boldsymbol{x})=0\ \text{for}\ i=1\ldots,n+1\}
	\end{equation*}
	denote the probability that the system $f_1(\boldsymbol{x})=\ldots=f_{n+1}(\boldsymbol{x})=0$ has a common solution. Then there exists a dimensional constant $K=K(n) <\infty$ such that $$\mathcal{P}(d,n)\leq K/d$$ for all $d\in\mathbb{Z}_+$.
\end{thm}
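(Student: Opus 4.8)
The plan is to reduce to the multipolynomial resultant and then to anti-concentration of Bernoulli sums. Homogenizing $f_1,\dots,f_{n+1}$ with an auxiliary variable $t_0$ as in~\eqref{eq:hom} produces forms $F_1,\dots,F_{n+1}\in\mathbb{Z}[t_0,\dots,t_n]$ of degree $d$ whose coefficients are exactly the $\pm1$ variables $\alpha_{i,J}$, and a common affine zero of the $f_i$ is a common zero of the $F_i$ in $\mathbb{P}^n$. Hence, by Theorem~\ref{th:hr},
\[
\mathcal{P}(d,n)\ \le\ Prob_d\!\left(\mathrm{Res}_{d,\dots,d}(F_1,\dots,F_{n+1})=0\right)\ \le\ Prob_d(\mathcal{G}^{c})\ +\ \mathbb{E}\!\left[\#Z(F_1,\dots,F_{n+1})\cdot\mathbf{1}_{\mathcal{G}}\right],
\]
where $Z(\cdot)$ denotes the common zero set in $\mathbb{P}^n$, $\mathcal{G}$ is the event that the subsystem $F_2=\dots=F_{n+1}=0$ has only isolated zeros in $\mathbb{P}^n$ (then at most $d^{\,n}$ of them, by B\'ezout), and on $\mathcal{G}^{c}$ the resultant vanishes automatically. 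The event $\mathcal{G}^{c}$ — that $n$ random Bernoulli forms fail to be a regular sequence in $\mathbb{P}^n$ — is empty for $n=1$ and, for $n\ge2$, is handled by the analogous statement one dimension down (restrict to a fixed hyperplane), contributing $O(1/d)$.

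\textbf{Pointwise estimate and the symmetry trick.} For $\zeta\in\mathbb{P}^n$ set $\rho(\zeta):=Prob_d(F_1(\zeta)=0)$, the probability that the $\pm1$ combination $\sum_{|J|=d}\alpha_{1,J}\zeta^J$ vanishes; this is well defined on $\mathbb{P}^n$ and equals $0$ unless $\zeta$ is \emph{special}, meaning some $\pm1$ linear combination of the monomial values $\{\zeta^J\}_{|J|=d}$ is $0$ — so $\rho(\zeta)=0$ as soon as a single monomial strictly dominates, and the special points form a countable set of algebraic points of bounded height, the most dangerous being low–order roots of unity in the coordinates. Conditioning on $F_2,\dots,F_{n+1}$ shows that, on $\mathcal{G}$, almost surely every common zero of $F_1,\dots,F_{n+1}$ is special; since the $F_i$ are i.i.d.\ one has $Prob_d(F_i(\zeta)=0)=\rho(\zeta)$ for each $i$ with these $n+1$ events independent, whence
\[
\mathbb{E}\!\left[\#Z(F_1,\dots,F_{n+1})\cdot\mathbf{1}_{\mathcal{G}}\right]\ \le\ \sum_{\zeta\ \text{special}}\rho(\zeta)^{\,n+1}.
\]
The point of homogenizing all $n+1$ equations is precisely to make the exponent $n+1\ge2$ appear. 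Finally, grouping the coefficients $\alpha_{1,J}$ according to the level sets of $J\mapsto\zeta^J$ and invoking the Erd\H{o}s--Littlewood--Offord / Kolmogorov--Rogozin inequality gives $\rho(\zeta)\le C(n)/\sqrt d$ for all $\zeta$ — sharply, the value $\asymp d^{-1/2}$ being attained at points such as $(1:{-}1:0:\dots:0)$ — with an additional power of $d$ gained as the relevant root of unity acquires larger order.

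\textbf{The remaining sum, and the main obstacle.} It remains to prove $\sum_{\zeta\ \text{special}}\rho(\zeta)^{\,n+1}=O(1/d)$. The dominant contribution is from the $O(n^2)$ worst points, where $\rho\asymp d^{-1/2}$; they contribute $O(d^{-(n+1)/2})=O(1/d)$, which is where the rate $1/d$ comes from (and is already the whole content when $n=1$). For a special $\zeta$ tied to a primitive $m$-th root of unity one has $\rho(\zeta)\lesssim(m/d)^{c(m)}$ with $c(m)\ge1$ for $m\ge3$ and $c(m)\to\infty$ along $m$, and $\rho(\zeta)$ is exponentially small once $m\gtrsim\sqrt d$ because $\pm1$ relations among high roots of unity are rigid; the remaining special points — algebraic numbers that are roots of $\pm1$-polynomials but not roots of unity — are ``less special'' and carry even smaller $\rho$. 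Feeding these decay rates into the sum, together with the fact that for each bounded level there are only $\mathrm{poly}(d)$ special points (a Mahler–measure/height count, using that such points and their conjugates lie in a fixed annulus), makes the series converge to $O(1/d)$; Markov's inequality then gives $\mathcal{P}(d,n)\le K(n)/d$. The principal obstacle is exactly this last step — upgrading the pointwise bound $\rho(\zeta)\le C(n)/\sqrt d$ to a summable family with the correct total $O(1/d)$. For $n=1$ it is the assertion that two independent degree-$d$ random Littlewood polynomials share a rational irreducible factor with probability $O(1/d)$, and it requires a careful interplay of Littlewood--Offord anti-concentration (where the order in which the coefficients of $F_1$ are revealed matters), height bounds that limit the possible common factors, and the $\mathcal{G}^{c}$ estimate; an alternative organization of the whole proof is an induction on $n$, eliminating $x_n$ to replace the system by $n$ integer-coefficient, uniformly bounded resultants in $n-1$ variables.
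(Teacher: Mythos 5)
First, a point of comparison: the paper does not prove this statement at all; it is Kozma--Zeitouni's theorem, quoted from \cite{KZ} and used as a black box in the proof of Theorem~\ref{th:main}. So the only thing to assess is whether your sketch is a proof, and it is not: the reduction (homogenize via \eqref{eq:hom}, invoke Theorem~\ref{th:hr} and B\'ezout, then anti-concentration) is reasonable, but its quantitative core is missing.

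The genuine gap is the step $\mathbb{E}\bigl[\#Z(F_1,\ldots,F_{n+1})\cdot\mathbf{1}_{\mathcal{G}}\bigr]\le\sum_{\zeta\ \mathrm{special}}\rho(\zeta)^{n+1}$ together with the claim that this sum is $O(1/d)$. Conditioning on $F_2,\ldots,F_{n+1}$ makes the common zeros of the subsystem \emph{random} points, and independence of $F_1$ then yields only one factor of $\rho$ per zero, i.e.\ $\mathbb{E}[\#Z\cdot\mathbf{1}_{\mathcal{G}}]\le\mathbb{E}\bigl[\sum_{\zeta\in Z(F_2,\ldots,F_{n+1})}\rho(\zeta)\bigr]\le d^{\,n}\sup_{\zeta}\rho(\zeta)$, which with $\rho\le C/\sqrt{d}$ is useless. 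To obtain the exponent $n+1$ you tacitly replace the random zeros by a deterministic list of ``special'' points and apply a union bound; but that list is not ``a countable set of algebraic points of bounded height'': for $n\ge 2$ the locus $\{\rho>0\}$ is a countable union of hypersurfaces, hence uncountable, and in the correct deterministic reformulation (all points arising as isolated zeros of \emph{some} Bernoulli subsystem, or all roots of degree-$d$ Littlewood polynomials when $n=1$) the count is exponential in $d$ (already about $d\,2^{d}$ for $n=1$), not $\mathrm{poly}(d)$, so the pointwise bound $\rho(\zeta)^{n+1}\le C d^{-(n+1)/2}$ cannot absorb the entropy of the union bound. The asserted decay hierarchy $\rho(\zeta)\lesssim(m/d)^{c(m)}$ for points tied to $m$-th roots of unity, the height/Mahler-measure count of special points per level, and the final summability $\sum_{\zeta}\rho(\zeta)^{n+1}=O(1/d)$ are all stated without proof, and the last of these --- which you yourself flag as ``the principal obstacle'' --- is essentially the content of the theorem. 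The treatment of $\mathcal{G}^{c}$ is likewise only gestured at: restriction to a generic fixed hyperplane does not produce Bernoulli polynomials in fewer variables; one needs the coordinate hyperplane $t_0=0$ (top-degree parts) or $x_i=0$ and a dehomogenization, as in the paper's own proof of Theorem~\ref{th:main}. As it stands the proposal is a plausible reduction scheme, not a proof; the statement should be imported from \cite{KZ}, as the paper does.
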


Next, we prove our main result:
\begin{proof}[Proof of Theorem \ref{th:main}]
Let $f_{d,i}$ be a random Bernoulli polynomial of the form
	\begin{equation}
		f_{d,i}(\boldsymbol{x})=\sum_{|J|\leq d}\alpha_{i,J}\boldsymbol{x}^{J} \in\mathbb{Z}[x_1,\ldots,x_n],
	\end{equation}
where $\{\alpha_{i,J}\}$ is a family of independent Bernoulli random variables for $i=1,\ldots,n$. 

		We investigate the directional resultants of the system $\boldsymbol{f}$ for all nonzero primitive direction vectors $\boldsymbol{v}\in\mathbb{Z}^n$. 
	By  \cite[Proposition 3.8]{DS} it is enough to check the inward normals to the Minkowski sum of the supports $nd\Sigma_n$ which has $n+1$ facets with $n+1$ inward normals given by  $\boldsymbol{v}_m:=\boldsymbol{e}_m$ for $m=1,\ldots,n$ and $\boldsymbol{v}_{n+1}:=-\sum_{m=1}^n\boldsymbol{e}_m$ where
	$\{\boldsymbol{e}_m\}_{m=1}^n$ is the standard basis of $\mathbb{R}^n$. 
	
	For $\boldsymbol{v}_m=\boldsymbol{e_m}$ the intersection of the support with the supporting hyperplane in the direction $\boldsymbol{e_m}$ is of the form 
	
	\begin{equation}	\mathcal{A}^{\boldsymbol{v}_m}=\left\lbrace(j_1,\ldots,j_n)\in d\Sigma_n\cap \mathbb{Z}^n: j_m=0\ \text{and}\ \sum_{l=1}^n j_l \leq d	\right\rbrace
	\end{equation}
	$m=1,\ldots,n$. 
	Hence, the polynomials $f_i^{\boldsymbol{v}_m}$ can be written as  
	\begin{equation}
		f_i^{\boldsymbol{v}_m}:= \sum_ {J\in\mathcal{A}^{\boldsymbol{v}_m}}\alpha_{i,J}\boldsymbol{x}^{J}
	\end{equation}
	for $i=1,\ldots,n$. Note that polynomials $f_i^{\boldsymbol{v}_m}$ depend on $n-1$ variables.  As in the Definition \ref{def:directional}, we choose  the vector  $\boldsymbol{b}_{i,\boldsymbol{v}_m}=\mathbf{0}$ so that $\mathcal{A}^{\boldsymbol{v}_m}-\boldsymbol{b}_{i,\boldsymbol{v}_m}\subset\mathbb{Z}^n\cap{\boldsymbol{v}_m}^{\perp}$ and we may take $g_{i,\boldsymbol{v}_m}:=f_i^{\boldsymbol{v}_m}$ for each $i=1,\ldots,n$.
	
	Recall that for two univariate polynomials $h_1,h_2\in\mathbb{C}[x]$, their resultant $\mathcal{R}es(h_1,h_2)$ is zero if and only if $h_1$ and $h_2$ have a common solution in  $\mathbb{C}$. Therefore, if $n=2$ the necessary and sufficient condition for $g_{1,\boldsymbol{v}_m}$ and $g_{2,\boldsymbol{v}_m}$ have zero resultant is that they have a common zero. Theorem \ref{th:KZ} implies that there exists a constant $K_m$ which is independent of $d$ so that the aforementioned event has probability at most $K_m/d$.
	
	On the other hand, when $n>2$, we perform the homogenization process to each $(n-1)$ variable polynomial $g_{i,\boldsymbol{v}_m}$ for $i=1,\ldots,n$ as described in equation (\ref*{eq:hom}). We obtain the $n$ variable homogenous polynomials $G_{i,\boldsymbol{v}_m}$ of the form 
	
	\begin{equation}
		G_{i,\boldsymbol{v}_m}(t,\boldsymbol{x})=\sum_{\textbf{$J$}\in\mathcal{A}^{\boldsymbol{v}_m}}\alpha_{i,J}t^{d-|J|}\boldsymbol{x}^{J}.
	\end{equation}
In order to compare the sparse resultant of the polynomials $g_{i,\boldsymbol{v}_m}$ and the multipolynomial resultant of the  homogeneous polynomials $G_{i,\boldsymbol{v}_m}$, we check the conditions of Corollary \ref*{cor:res}. Let $Z(\boldsymbol{G})$ be the set of nontrivial solutions of the system $\boldsymbol{G}=(G_{1,\boldsymbol{v}_m},\ldots,G_{n,\boldsymbol{v}_m})$ and suppose that $\boldsymbol{G}$ has a solution $\boldsymbol{\xi}=(t,\xi_2,\ldots,\xi_n)$ in the hyperplane at infinity $H^{\infty}(t)$. Evaluating these homogeneous polynomials at $t=0$, we obtain the top degree homogeneous part of the polynomials $g_{i,\boldsymbol{v}_m}$ for $i=1,\ldots,n$. Since $\boldsymbol{\xi}\in H^{\infty}(t)$, it has a nonzero coordinate $\xi_k$ for some $k\in\{2,\ldots,n\}$. For simplicity, let us assume $k=2$ and  define the new variables  $z_{i}:={\xi_{i+2}}/\xi_2$ for $i=1,\ldots,n-2$. Applying this change of variables, we obtain 
	
	\begin{equation}
		\widetilde{G}_{i,\boldsymbol{v}_m}(z_1,\ldots,z_{n-2})=\sum_{|J|\leq d}\alpha_{i,J}\boldsymbol{z}^{\varphi(J)}
	\end{equation}
	where $\varphi:\mathbb{R}^n\rightarrow\mathbb{R}^{n-2}$ with $\varphi(j_1,\ldots,j_n)=(j_3,\ldots,j_n)$. This gives $n$ random Bernoulli  polynomials of degree $d$ in $n-2$ variables. Hence by Theorem \ref{th:KZ}, there exists a positive constant $C_i$, depending only the dimension $n$ such that the probability that the overdetermined system of Bernoulli polynomials $\widetilde{G}_{i,\boldsymbol{v}_m}(z_1,\ldots,z_{n-2})$ have a common solution is less than $C_i/d$.  We infer that the system of homogenized polynomials $G_{i,\boldsymbol{v}_m}$ has no common zero at hyperplane at infinity $H^{\infty}(t)$ except a set  that has  probability at most $C_i/d$.  Then by Corollary \ref{cor:res}, outside of a set of small probability,  the system of polynomials consisting $g_{i,\boldsymbol{v}_m}$ has a common solution if and only if the directional resultant $\mathcal{R}es_{\mathcal{A}^{\boldsymbol{v}_m}}(f_{1}^{\boldsymbol{v}_m},\ldots,f_{n}^{\boldsymbol{v}_m})=0$. Now, since  the system of Bernoulli polynomials $g_{i,\boldsymbol{v}_m}$ contains $n$ polynomials in $n-1$ variables, by Theorem \ref{th:KZ}, there is a dimensional constant $\widetilde{C}_i$ so that the probability that this system has common solution is at most $\widetilde{C}_i/d$. Hence outside of a set that has probability $K_i/d:=C_i/d+\widetilde{C_i}/d$ , the directional resultant $\mathcal{R}es_{\mathcal{A}^{\boldsymbol{v}_m}}(f_{1}^{\boldsymbol{v}_m},\ldots,f_{n}^{\boldsymbol{v}_m})\neq0$ for all $\boldsymbol{v}_m$  for $m=1,\ldots,n$.
	
	Next, we consider the inward normal vector $\boldsymbol{v}_{n+1}=-\sum_{m=1}^n \boldsymbol{e}_m$ and we find the minimal weighted set in this direction as $\mathcal{A}^{\boldsymbol{v}_{n+1}}=\left\lbrace J\in d\Sigma_n\cap \mathbb{Z}^n: |J|=d\right\rbrace$. Hence, the directed polynomials in this case are of  the form 
	\begin{equation}
		f_i^{\boldsymbol{v}_{n+1}}(\boldsymbol{x})=\sum_{|J|=d}\alpha_{i,J}\boldsymbol{x}^{J}
	\end{equation}
	In this case $\mathcal{A}^{\boldsymbol{v}_{n+1}}$ is not a subspace of $\mathbb{Z}^n\cap\boldsymbol{v}_{n+1}^{\perp}$, hence we need to translate it by subtracting a suitable vector $\boldsymbol{b}_{i,\boldsymbol{v}_{n+1}}$. For Laurent polynomial systems, the sparse resultant is invariant under translations of supports (see \cite{DS}, Proposition 3.3). Since the polynomials $f_{d,i}$ are not Laurent, we need to determine the effects of this translations. Consider the system of Bernoulli polynomials $\boldsymbol{f}_d$ and set of its simultaneous zeros $Z(\boldsymbol{f}_d)$. For a solution $\boldsymbol{x}=(x_1,\ldots,x_n)\in Z(\boldsymbol{f}_d)$ and assume that $x_1=0$. In order to examine the incidence of this case, we evaluate the system $\boldsymbol{f}_d$ at $x_1=0$ and we obtain a new system of $n$ Bernoulli polynomials with $n-1$ variables. By Theorem \ref{th:KZ}, there exists a constant $C_1$ which is independent of $d$ such that this system has a common solution with probability at most $C_1/d$. Therefore the probability of the event that $x_1=0$ is less than $C_1/d$. Hence, there is no harm of translation of supports outside of a set that has probability at most $C/d$, where $C:=\sum_{i=1}^n C_i$. Now, choosing the vector  $\boldsymbol{b}_{i,\boldsymbol{v}_{n+1}}=(d,0,\ldots,0)$ so that $\mathcal{A}^{\boldsymbol{v}_{n+1}}-\boldsymbol{b}_{i,\boldsymbol{v}_{n+1}}\subset\mathbb{Z}^n\cap\boldsymbol{v}_{n+1}^{\perp}$, we obtain the polynomials of the form 
	\begin{equation}
		g_{i,\boldsymbol{v}_{n+1}}(\boldsymbol{x})=\sum_{J\in\mathcal{A}^{\boldsymbol{v}_{n+1}}-\boldsymbol{b}_{i,\boldsymbol{v}_{n+1}}}\alpha_{i,J}\boldsymbol{x}^{w(J)}
	\end{equation}
	with $w:\mathbb{R}^n\rightarrow\mathbb{R}^n$ satisfying $(j_1,j_2,\ldots,j_n)\mapsto(-d+j_1,j_2,\ldots,j_n)$. We substitute the new variables $y_i:=x_{i+1}/x_1$ into $g_{i,\boldsymbol{v}_{n+1}}$ for $i=1,\ldots,n-1$ and obtain
	
	\begin{equation}
		g_{i,\boldsymbol{v}_{n+1}}(\boldsymbol{y})=\sum_{|J|\leq d}\alpha_{i,J}\boldsymbol{y}^{\sigma(J)}
	\end{equation}
	for $\boldsymbol{y}\in\mathbb{C}^{n-1}$ and $\sigma:\mathbb{R}^n\rightarrow\mathbb{R}^n$ with $\sigma(j_1,j_2,\ldots,j_n)=(0,j_2,\ldots,j_n)$. The system containing the polynomials 	$g_{i,\boldsymbol{v}_{n+1}}(\boldsymbol{y})$, $i=1,\ldots,n$ contains $n$ random Bernoulli polynomials with $n-1$ random variable as in the cases $\boldsymbol{v}_m={\bf e}_m$. By applying the same argument, we can show that $\mathcal{R}es_{\mathcal{A}^{\boldsymbol{v}_{n+1}}} (f_{1}^{\boldsymbol{v}_{n+1}},\ldots,f_{n}^{\boldsymbol{v}_{n+1}})\neq0$ outside of a set that has probability at most $K_{i+1}/d$.

Now, we define the exceptional set $\mathcal{E}_{n,d}$ as a subset of $Poly_{n,d}$ which contains the systems $\boldsymbol{f}_d$ that has a zero directional resultant for some nonzero primitive vector $\boldsymbol{v}$ or the systems $\boldsymbol{f}_d$ have a common solution $\boldsymbol{x}\in\mathbb{C}^n$ with $x_i=0$ for some $i=1,\ldots,n$. More precisely, letting
\begin{eqnarray}\label{exceptional}
\mathcal{E}_{n,d}&:=&\{\boldsymbol{f}_d\in Poly_{n,d}: \exists~\boldsymbol{v}\in\mathbb{Z}^n\setminus\{\boldsymbol{0} \} \ni \mathcal{R}es_{\boldsymbol{A}^{\boldsymbol{v}}}\boldsymbol{f}_d^{\boldsymbol{v}}=0\}\\ \nonumber
	&\bigcup&\{\boldsymbol{f}_d\in Poly_{n,d}: \exists~\boldsymbol{x}\in Z(\boldsymbol{f}_d) \ni \prod x_i=0\}
\end{eqnarray}
we see that there exists a positive constant $K$ which is independent of $d$ such that $$Prob_d(\mathcal{E}_{n,d})\leq \frac{K}{d}$$ where $K:=\sum_{i=1}^{n+1}K_i+C$. 
\end{proof}

Next, we recall a deterministic equidistribution results for the solutions of systems of integer coefficient polynomials \cite{DGS}. 	
	For a polynomial $f\in\mathbb{C}[x_1,\ldots,x_n]$, the supremum norm of $f$ on the unit torus is defined as 
	\begin{equation*}
		\left\| f\right\| _{\sup}:= \sup_{|w_1|=\ldots=|w_n|=1}\left| f(w_1,\ldots,w_n)\right|.
	\end{equation*}
Let $\nu_{\text{Haar}}$ be the Haar measure on $\mathbb{C}^n$ with support $(S^1)^n$ and of total mass 1. Assume that $\boldsymbol{f} \in Poly_{n,d}$ be a polynomial mapping such that the set of simultaneous zeros $Z(\boldsymbol{f})$ is a discrete set. We denote by denote the discrete probability measure on $\mathbb{C}^n$ associated to the $Z(\boldsymbol{f})$ by $\delta_{Z(\boldsymbol{f})}$. The following result gives the asymptotic distribution of the zeros of such a system $\boldsymbol{f}$ if the coefficients are integer: 

\begin{thm}\label{th:dgs}\cite{DGS} Let $ \boldsymbol{f}=(f_1,\ldots,f_n)$ be a polynomial mapping with $f_i\in \mathbb{Z}[x_1,\ldots,x_n]$ of degree $d\geq1$ for each $i=1,\dots,n$. Assume that $\mathcal{R}es_{\mathcal{A}^{\boldsymbol{v}}}(f_1^{\boldsymbol{v}},\ldots,f_n^{\boldsymbol{v}})\neq0$ for all $v\in\mathbb{Z}^n\setminus\{\mathbf{0}\}$ and $\log||f_i||_{\sup}=o(d)$. Then 
	
	\begin{equation*}
		\lim_{d\rightarrow\infty}\delta_{Z(\boldsymbol{f})}= \nu_{\text{Haar}}
	\end{equation*}
	in the weak topology.
\end{thm}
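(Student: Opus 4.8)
Since Theorem \ref{th:dgs} is quoted from \cite{DGS}, I only outline the strategy of its proof. The plan is to regard $Z(\boldsymbol f)$ as a Galois-stable $0$-cycle of \emph{small height} on the algebraic torus and then apply the arithmetic equidistribution theorem for small points (Bilu's theorem), the only step that is not essentially formal being an estimate for the height of that cycle. \textbf{Step 1 (reduction).} By the remark following Theorem \ref{BerTh}, the hypothesis $\mathcal{R}es_{\mathcal{A}^{\boldsymbol v}}(f_1^{\boldsymbol v},\ldots,f_n^{\boldsymbol v})\neq0$ for every $\boldsymbol v\in\mathbb Z^n\setminus\{\boldsymbol 0\}$ forces $Z(\boldsymbol f)$ to be a finite subset of $(\mathbb C^{\times})^n$; moreover, viewing the (homogenized) $f_1,\ldots,f_n$ as hypersurfaces of degree $d$ in $\mathbb P^n$, the non-vanishing of the directional resultants in the directions $\boldsymbol e_1,\ldots,\boldsymbol e_n$ and $-\sum_m\boldsymbol e_m$ keeps the zeros away from the $n+1$ coordinate hyperplanes of $\mathbb P^n$ (cf.\ Corollary \ref{cor:res}), so by B\'ezout's theorem $Z(\boldsymbol f)$, counted as the $0$-cycle $\operatorname{div}(f_1)\cdots\operatorname{div}(f_n)$, has degree $d^n$ and lies entirely in $(\mathbb C^{\times})^n$. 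Since $f_i\in\mathbb Z[x_1,\ldots,x_n]$ the cycle $Z(\boldsymbol f)$ is stable under $\operatorname{Gal}(\overline{\mathbb Q}/\mathbb Q)$, hence $\delta_{Z(\boldsymbol f)}$ is an average of Galois orbits of algebraic points of $(\overline{\mathbb Q}^{\times})^n$. Finally, by Stone--Weierstrass it suffices to prove $\langle\delta_{Z(\boldsymbol f)},\varphi\rangle\to\langle\nu_{\text{Haar}},\varphi\rangle$ for Laurent monomials $\varphi$ on $(S^1)^n$, i.e.\ to prove that these Galois orbits equidistribute toward $\nu_{\text{Haar}}$ in Bilu's sense.

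\textbf{Step 2 (the height estimate --- the crux).} I would metrize $\mathcal O(1)$ on $\mathbb P^n$ by the toric canonical metric ($\|x_0\|=|x_0|/\max_i|x_i|$), whose associated height $\widehat h$ is the Weil height on $\mathbb P^n$: it vanishes on a torus point exactly when that point is torsion, it satisfies $\widehat h(\boldsymbol\xi)\asymp_n\sum_{j}h(\xi_j)$ on $(\overline{\mathbb Q}^{\times})^n$, and the arithmetic self-intersection number of $\overline{\mathcal O(1)}$ vanishes. Because this self-intersection vanishes, the arithmetic B\'ezout inequality for the proper intersection $Z(\boldsymbol f)=\operatorname{div}(f_1)\cdots\operatorname{div}(f_n)$ carries no additive constant:
\[ \widehat h\big(Z(\boldsymbol f)\big)\ \le\ d^{n-1}\sum_{i=1}^n\widehat h\big(\operatorname{div}(f_i)\big), \]
and the canonical height of the hypersurface $\{f_i=0\}$ is bounded by its Mahler measure, $\widehat h(\operatorname{div}(f_i))\le\log\|f_i\|_{\sup}+O_n(\log d)$ --- indeed $\log|f_i|\le\log\|f_i\|_{\sup}$ on $(S^1)^n$, while the non-archimedean places contribute $\le0$ because the coefficients are integers. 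Dividing by $\#Z(\boldsymbol f)=d^n$ and using the hypothesis $\log\|f_i\|_{\sup}=o(d)$,
\[ \frac{1}{d^n}\sum_{\boldsymbol\xi\in Z(\boldsymbol f)}\widehat h(\boldsymbol\xi)\ =\ \frac1{d^n}\,\widehat h\big(Z(\boldsymbol f)\big)\ \le\ \frac1d\sum_{i=1}^n\big(\log\|f_i\|_{\sup}+O_n(\log d)\big)\ \longrightarrow\ 0. \]
So the points of $Z(\boldsymbol f)$ have normalized height tending to $0$; the same bound controls $\sum_{\boldsymbol\xi}\bigl|\log|\xi_j|\bigr|$ for each $j$, so no mass of $\delta_{Z(\boldsymbol f)}$ escapes to the coordinate hyperplanes or to infinity and the family is tight on $(\mathbb C^{\times})^n$.

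\textbf{Step 3 (strictness and conclusion).} To invoke Bilu's theorem it remains to check the non-concentration hypothesis: for every proper torsion coset $H=\zeta\, T$ of $(\mathbb C^{\times})^n$, the fraction of $Z(\boldsymbol f)$ lying on $H$ tends to $0$ as $d\to\infty$. This is immediate here --- the restrictions $f_i|_H$ are Laurent polynomials on the $(n-1)$-torus $T$ whose Newton polytopes are images of $d\Sigma_n$ under the projection dual to $T$, hence of $(n-1)$-volume $\le c(n)\,d^{n-1}$ \emph{uniformly in $H$}; their common zero set on $H$ cannot be positive-dimensional (else $Z(\boldsymbol f)$ would be infinite), so by Bernstein's bound $\#(H\cap Z(\boldsymbol f))\le c(n)\,d^{n-1}$, whence $\#(H\cap Z(\boldsymbol f))/d^n\le c(n)/d\to0$. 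Granting Step 2, the arithmetic equidistribution theorem for Galois-stable families of small points on $\mathbb G_m^n$ (Bilu) now gives $\delta_{Z(\boldsymbol f)}\to\nu_{\text{Haar}}$ weakly, which is the assertion.

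\textbf{Main obstacle.} Steps 1 and 3 and the final appeal to Bilu's theorem are essentially formal; the real work is Step 2, and in particular two points: one must use the \emph{canonical} metric on the toric variety $\mathbb P^n$ --- rather than the Fubini--Study metric, whose arithmetic B\'ezout inequality retains a positive constant that survives normalization --- and one must bound the canonical height of each hypersurface $\operatorname{div}(f_i)$ by $\log\|f_i\|_{\sup}$. Making this rigorous requires the arithmetic intersection theory of semipositive metrized line bundles on toric varieties, which is precisely the apparatus developed in \cite{DGS}, and the displayed limit is its qualitative shadow.
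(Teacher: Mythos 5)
This theorem is not proved in the paper at all: it is imported from \cite{DGS}, and within the paper's own toolkit it is in fact a short \emph{quantitative} deduction from the two other results quoted from \cite{DGS} in Section 4. Namely, since the $f_i$ have integer coefficients, every nonvanishing directional resultant is a nonzero integer, so $|\mathcal{R}es_{\mathcal{A}^{\boldsymbol{v}}}(f_1^{\boldsymbol{v}},\ldots,f_n^{\boldsymbol{v}})|\geq 1$; Proposition \ref{prop1} then gives $\eta(\boldsymbol{f})\leq (n+\sqrt n)\sum_i\log\|f_i\|_{\sup}/d=o(1)$, and the discrepancy bounds of Theorem \ref{th1.4} force $\Delta_{\text{ang}}(Z(\boldsymbol{f}))\to0$ and $\Delta_{\text{rad}}(Z(\boldsymbol{f}),\varepsilon)\to0$, which is exactly weak convergence of $\delta_{Z(\boldsymbol{f})}$ to $\nu_{\text{Haar}}$. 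Your route --- small canonical height of the Galois-stable $0$-cycle plus Bilu-type equidistribution on $\mathbb{G}_m^n$ --- is genuinely different in spirit (qualitative arithmetic equidistribution versus Erd\"os--Tur\'an-type discrepancy estimates), and Step 2 is sound in outline (canonical toric metrics, arithmetic B\'ezout without additive constant, $\widehat{h}$ of each divisor bounded by $\log\|f_i\|_{\sup}$ since the Mahler measure is at most the sup norm and the finite places contribute nonpositively for integer coefficients). But the quantitative route buys the rates that this paper actually needs in Section 4, whereas your route, as written, has two genuine gaps at the decisive points.

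First, Bilu's theorem equidistributes Galois orbits along a \emph{strict sequence of points} whose heights tend to $0$; what you have for each $d$ is a Galois-stable cycle of degree $d^n$ whose \emph{average} height is $o(1)$. Passing from the average to the whole cycle requires either a quantitative equidistribution inequality (discrepancy controlled by height --- which is precisely the content of \cite{DGS}) or an orbit-decomposition argument that separately handles orbits of not-so-small height, torsion orbits of growing order, torsion orbits of bounded order (which carry height $0$ and do \emph{not} equidistribute, so their total multiplicity must be shown to be $o(d^n)$), and non-torsion orbits of small height (where one needs a Northcott/compactness argument to force degrees to infinity before Bilu applies). None of this is in your sketch; ``Bilu for Galois-stable families of small points'' is not an off-the-shelf statement. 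Second, the non-concentration bound of Step 3 is wrong as stated: the Bernstein/B\'ezout count of $Z(\boldsymbol{f})\cap H$ on a torsion coset $H$ is \emph{not} uniform in $H$ --- the Newton polytopes of the restrictions depend on the lattice vector defining the subtorus, and the bound degenerates entirely for zero-dimensional cosets, i.e.\ single torsion points, where concentration of multiplicity is exactly the scenario you must exclude (and where your ``restrict to $H$ and apply Bernstein'' argument says nothing). A fixed-character (Weyl-criterion) version, uniform only over translates of a fixed subtorus, is what is actually needed and is plausible, but it has to be formulated and proved, and even then it only feeds into the orbit-decomposition machinery above. So the proposal identifies the right circle of ideas but leaves precisely the quantitative core --- the step that \cite{DGS} supply and that this paper relies on through Proposition \ref{prop1} and Theorem \ref{th1.4} --- unproved.
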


\begin{proof}[Proof of Corollary \ref{cor:eq}]
Consider the system of Bernoulli polynomials $\boldsymbol{f}_d=(f_{d,1},\ldots,f_{d,n})$. Since all the coefficients are $1$ or $-1$, by triangle inequality

	\begin{equation}
	\left\| f_{d,i}\right\|_{\sup}= \sup_{|w_1|=\ldots=|w_n|=1}|f_{d,i}(w_1,\ldots,w_n)|\leq {{n+ d}\choose{d}}= d^n+O(d^{n-1})
	\end{equation}
	where ${{n+ d}\choose{d}}$ is the dimension of space of polynomials $Poly_{n,d}$. 
This in turn implies that  $\log\left\| f_{d,i}\right\|_{\sup} = o(d) $. Moreover, by Theorem \ref{th:main} for each $\boldsymbol{f}_d\in Poly_{n,d}\setminus\mathcal{E}_{n,d}$ we have $$\mathcal{R}es_{\mathcal{A}^{\boldsymbol{v}}}(f_1^{\boldsymbol{v}},\ldots,f_n^{\boldsymbol{v}})\neq0$$ for all $v\in\mathbb{Z}^n\setminus\{\mathbf{0}\}$. Hence, by Theorem \ref{th:dgs} 
	\begin{equation*}
	\lim_{d\rightarrow\infty}\delta_{Z(\boldsymbol{f}_d)}= \nu_{\text{Haar}}
\end{equation*}
in the weak topology. In particular,  $\delta_{Z(\boldsymbol{f}_d)}\rightarrow \nu_{\text{Haar}}$ in probability  since $Prob_d(\mathcal{E}_{n,d})\to 0$ as $d\to\infty$.
  		
\end{proof}

\section{Expected Zero Distribution }

In this section, we introduce radial and angle discrepancies for random Bernoulli polynomial mappings in order to study asymptotics of expected zero measures. We adapt these concepts from \cite{DGS} and refer the reader to the manuscript \cite{DGS} and references therein for a detailed account of the preliminary results this section.

Let $Z$ be a 0-dimensional effective cycle in $\Bbb{C}^n$ that is there is a non-empty finite collection of points $\boldsymbol{\xi}=(\xi_1,\ldots,\xi_n)\in\mathbb{C}^n$ and $m_{\boldsymbol{\xi}}\in\mathbb{N}$, called the multiplicity of $\boldsymbol{\xi}$, such that $ Z=\sum_{\boldsymbol{\xi}}m_{\boldsymbol{\xi}}[{\boldsymbol{\xi}}]$. The degree of $Z$ is defined by $deg(Z)=\sum_{\boldsymbol{\xi}}m_{\boldsymbol{\xi}}$ which is a positive number.

\begin{defn}\cite{DGS}
	Let $Z$ be a 0-dimensional effective cycle in $\Bbb{C}^n$. For each $\boldsymbol{\alpha}=(\alpha_1,\ldots,\alpha_n)$ and $\boldsymbol{\beta}=(\beta_1,\ldots,\beta_n)\in \Bbb{R}^n$ such that $-\pi\leq\alpha_j<\beta_j\leq\pi$, $j=1,\ldots,n$ we consider the cycle
	\begin{equation*}
		Z_{\boldsymbol{\alpha},\boldsymbol{\beta}}:=\sum_{\{\boldsymbol{\xi}\in Z: \alpha_j<\arg(\xi_j)\leq\beta_j \}}m_{\boldsymbol{\xi}}[{\boldsymbol{\xi}}].
		\end{equation*}
		The $\textit{angle discrepancy}$ of $Z$ is defined as 
		\begin{equation*}
			\Delta_{\text{ang}}(Z):=\sup_{\boldsymbol{\alpha},\boldsymbol{\beta}}\left| \frac{\text{deg}(Z_{\boldsymbol{\alpha},\boldsymbol{\beta}})}{\text{deg}(Z)}-\prod_{j=1}^n\frac{\beta_j-\alpha_j}{2\pi}\right|.
		\end{equation*}
For $0<\varepsilon<1$ we consider the cycle
	\begin{equation*}
		Z_{\varepsilon}:=\sum_{\{\boldsymbol{\xi}\in Z: 1-\varepsilon<|\xi_j|<(1-\varepsilon)^{-1}\}}m_{\boldsymbol{\xi}}[{\boldsymbol{\xi}}].
	\end{equation*}
	The $\textit{radius discrepancy}$ of $Z$ with respect to $\varepsilon$ is defined as 
	\begin{equation*}
		\Delta_{\text{rad}}(Z,\varepsilon):= 1- \frac{\text{deg}(Z_{\varepsilon})}{\text{deg}(Z)}.
	\end{equation*}
\end{defn}
Note that $0<\Delta_{\text{ang}}(Z)\leq1$ and $0\leq\Delta_{\text{rad}}(Z,\varepsilon)\leq1$. Observe that the angle discrepancy and the radial discrepancy are generalizations of their one dimensional versions defined in  \cite{ET,HN}. 

Let $A_1,\ldots,A_n\subset \mathbb{Z}^n$ be a collection of finite sets and let $Q_i=conv(A_i)$ for each $i=1,\ldots,n$. Throughout this section we assume that $D:=MV_n(Q_1,\ldots,Q_n)\geq1$. For a vector $\boldsymbol{w}\in S^{n-1}$ in the unit sphere in $\mathbb{R}^n$, let $\boldsymbol{w}^{\perp}$ be its orthogonal subspace and $\pi_{\boldsymbol{w}^{\perp}}:\mathbb{R}^n\rightarrow \boldsymbol{w}^{\perp}$ be the corresponding orthogonal projection. We let $MV_{\boldsymbol{w}^{\perp}}$ denote the mixed volume of the convex bodies in $\boldsymbol{w}^{\perp}$ induced by the Euclidean measure on $\boldsymbol{w}^{\perp}$. We also denote 
$$D_{\boldsymbol{w},i}=MV_{\boldsymbol{w}^{\perp}}\left( \pi_{\boldsymbol{w}}(Q_1),\ldots,\pi_{\boldsymbol{w}}(Q_{i-1}),\pi_{\boldsymbol{w}}(Q_{i+1}),\ldots,\pi_{\boldsymbol{w}}(Q_n)\right) .$$
Let $\boldsymbol{f}=(f_1,\dots, f_n)$ be a mapping such that the coordinates $f_i$ are Laurent polynomials with $supp(f_i)=A_i$ for $i=1,\dots,n$. Following \cite{DGS}, we define the \textit{Erd\"{o}s-Tur\'{a}n size } of $\boldsymbol{f}$ by 

\begin{equation}
	\eta(\boldsymbol{f}):=\frac{1}{D} \sup _{\boldsymbol{w}\in S^{n-1}}\log \left( \frac{\prod_{i=1}^{n}||f||_{sup}^{D_{\boldsymbol{w},i}}}{\prod_{\boldsymbol{v}}|\mathcal{R}es_{\mathcal{A}^{\boldsymbol{v}}}(f_1^{\boldsymbol{v}},\ldots,f_n^{\boldsymbol{v}})|^{\frac{|\left\langle \boldsymbol{v},\boldsymbol{w}\right\rangle| }{2}}}\right)	
\end{equation}
where $\left\langle \cdot,\cdot\right\rangle $ is the standard inner product in $\mathbb{R}^n$ and the product in the denominator is taken over all non-zero primitive vectors $\boldsymbol{v}\in \mathbb{Z}^n$. We remark that the Erd\"{o}s-Tur\'{a}n size of a polynomial mapping $\boldsymbol{f}$ coincides with the bound in the Erd\"{o}s-Tur\'{a}n Theorem \cite{ET} for univariate polynomials.\\
 The next result gives an  upper bound for the Erd\"{o}s-Tur\'{a}n size of polynomial systems $\boldsymbol{f}$ with integer coefficients. 

\begin{proposition}\label{prop1}\cite[Proposition 3.15]{DGS}  Let $A_1,\ldots,A_n$ be a non-empty finite subsets of $\mathbb{Z}^n$ and set $Q_i=conv(A_i)$ with $MV_n(Q_1,\ldots,Q_n)\geq1$. Let  $d_i\in\mathbb{Z}_{\geq1}$ and $\boldsymbol{b}_i\in\mathbb{Z}^n$ so that $d_i\Sigma_n+\boldsymbol{b}_i$, $i=1,\ldots,n$.	Suppose that $f_1,\ldots,f_n\in\mathbb{Z}[x_1^{\pm1},\ldots,x_n^{\pm1}]$ with $supp(f_i)\subseteq A_i$ and such that $\mathcal{R}es_{\mathcal{A}_1^{\boldsymbol{v}},\ldots,\mathcal{A}_n^{\boldsymbol{v}}}(f_{d,1}^{\boldsymbol{v}},\ldots,f_{d,n}^{\boldsymbol{v}})\neq0$ for all $\boldsymbol{v}\in\mathbb{Z}^n\setminus\{\boldsymbol{0}\}.$ Then 
	\begin{equation*}
		\eta(\boldsymbol{f})\leq \frac{1}{D}\left( \left( n+\sqrt{n}\right) \left(\prod_{i=1}^nd_i\right) \sum_{i=1}^{n}\frac{\log\left\| f_i\right\| _{\sup}}{d_i}\right) .
	\end{equation*}
\end{proposition}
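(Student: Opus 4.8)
The plan is to bound the Erd\H{o}s--Tur\'an size $\eta(\boldsymbol f)$ directly from its definition, treating the numerator and the denominator inside the logarithm separately and then taking the supremum over $\boldsymbol w\in S^{n-1}$. For the numerator, I would first observe that since $supp(f_i)\subseteq A_i$ and $Q_i=conv(A_i)$, and since the hypothesis provides $\boldsymbol b_i\in\mathbb Z^n$ with $Q_i\subseteq d_i\Sigma_n+\boldsymbol b_i$ (reading the slightly abbreviated hypothesis charitably), each $D_{\boldsymbol w,i}$ — which is a mixed volume in $\boldsymbol w^\perp$ of the projections $\pi_{\boldsymbol w}(Q_j)$ for $j\neq i$ — is bounded above by the corresponding mixed volume for the simplices $d_j\Sigma_n$. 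A translation does not change a projected polytope's shape, and monotonicity of mixed volumes under inclusion then gives $D_{\boldsymbol w,i}\le MV_{\boldsymbol w^\perp}(\pi_{\boldsymbol w}(d_1\Sigma_n),\ldots,\widehat{i},\ldots,\pi_{\boldsymbol w}(d_n\Sigma_n))$. The key computation here is that for the standard simplex the projected mixed volume can be evaluated explicitly: one shows $MV_{\boldsymbol w^\perp}(\pi_{\boldsymbol w}(d_1\Sigma_n),\ldots,\widehat{i},\ldots)=\bigl(\prod_{j\neq i}d_j\bigr)\cdot MV_{\boldsymbol w^\perp}(\pi_{\boldsymbol w}(\Sigma_n),\ldots,\widehat{i},\ldots)$ by multilinearity of mixed volume, and that the latter unit-simplex quantity is controlled by a dimensional constant; in fact the relevant bound is $\sum_i D_{\boldsymbol w,i}\le (n+\sqrt n)\prod_i d_i/\min_i d_i$-type, but more precisely one uses $D_{\boldsymbol w,i}\le (1+\|\boldsymbol w\|_1)\prod_{j\neq i}d_j$ or an analogous Cauchy-type projection estimate, so that $\sum_{i=1}^n \tfrac{\log\|f_i\|_{\sup}}{d_i}\,D_{\boldsymbol w,i}\le (n+\sqrt n)\bigl(\prod_j d_j\bigr)\sum_i \tfrac{\log\|f_i\|_{\sup}}{d_i}$.

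For the denominator, I would use that every factor $|\mathcal{R}es_{\mathcal A^{\boldsymbol v}}(f_1^{\boldsymbol v},\ldots,f_n^{\boldsymbol v})|$ is a nonzero integer — because the $f_i$ have integer coefficients and the sparse resultant is a polynomial in $\mathbb Z[\boldsymbol u]$ — hence has absolute value at least $1$. Therefore the entire product in the denominator is $\ge 1$, and so $-\log(\text{denominator})\le 0$. This is exactly the mechanism that turns the Liouville-type integrality into an arithmetic inequality, and it is the reason integer coefficients are needed in the hypothesis. Combining, for each fixed $\boldsymbol w$ the argument of the logarithm is at most $\prod_i \|f_i\|_{\sup}^{D_{\boldsymbol w,i}}$, so
\[
\eta(\boldsymbol f)\le \frac 1D\sup_{\boldsymbol w\in S^{n-1}}\sum_{i=1}^n D_{\boldsymbol w,i}\log\|f_i\|_{\sup},
\]
and then the numerator estimate from the first paragraph finishes the bound.

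The main obstacle I anticipate is the precise projection/mixed-volume estimate: showing that $\sup_{\boldsymbol w\in S^{n-1}}\sum_i D_{\boldsymbol w,i}\,c_i \le (n+\sqrt n)\bigl(\prod_j d_j\bigr)\sum_i c_i/d_i$ for nonnegative $c_i$. This requires knowing how the mixed volume of projections of simplices behaves, in particular a sharp constant of the form $n+\sqrt n$; this presumably comes from a Cauchy-formula / surface-area argument in \cite{DGS} (bounding the $(n-1)$-dimensional projected volume of $\Sigma_n$ by the number of facets weighted by $|\langle \boldsymbol v,\boldsymbol w\rangle|$, summed with $\|\boldsymbol w\|_2\le 1$ giving the $\sqrt n$, plus the $n$ ``interior'' directions). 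Everything else — monotonicity of mixed volume, translation invariance of projected shape, and the integrality lower bound $|\mathcal{R}es_{\mathcal A^{\boldsymbol v}}|\ge 1$ — is routine once that geometric constant is in hand. I would therefore isolate the simplex-projection inequality as a separate lemma (or quote it from \cite{DGS}) and keep the rest of the argument as the short chain of inequalities above.
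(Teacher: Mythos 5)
The paper does not supply a proof of Proposition~\ref{prop1}: it is imported verbatim as \cite[Proposition~3.15]{DGS}, so there is no internal argument to compare your proposal against. Evaluated on its own terms, your sketch correctly isolates the two mechanisms that make the inequality work, and they are indeed the ones used in [DGS]. First, since the directional resultant $\mathcal{R}es_{\mathcal{A}^{\boldsymbol{v}}}$ is a polynomial in $\mathbb{Z}[\boldsymbol{u}]$ and the $f_i$ have integer coefficients, each nonzero value $\mathcal{R}es_{\mathcal{A}^{\boldsymbol{v}}}(f_1^{\boldsymbol{v}},\ldots,f_n^{\boldsymbol{v}})$ is a nonzero integer, hence of absolute value at least $1$; raising to the nonnegative exponent $|\langle\boldsymbol{v},\boldsymbol{w}\rangle|/2$ and multiplying gives denominator $\geq 1$, so
\[
\eta(\boldsymbol{f}) \le \frac{1}{D}\sup_{\boldsymbol{w}\in S^{n-1}}\sum_{i=1}^n D_{\boldsymbol{w},i}\log\|f_i\|_{\sup},
\]
exactly as you write. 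Second, using $Q_i\subseteq d_i\Sigma_n+\boldsymbol{b}_i$, translation invariance and monotonicity of mixed volume, and degree-one homogeneity in each slot, you reduce to bounding $MV_{\boldsymbol{w}^\perp}\bigl(\pi_{\boldsymbol{w}^\perp}(\Sigma_n),\ldots,\pi_{\boldsymbol{w}^\perp}(\Sigma_n)\bigr)$ uniformly in $\boldsymbol{w}$, which then gives $D_{\boldsymbol{w},i}\le C(n)\prod_{j\ne i}d_j$ and hence the claimed form. This chain is correct. The one piece you explicitly leave open is the value of $C(n)$, and your heuristic (Cauchy's projection formula applied to the $n+1$ facets of $\Sigma_n$, with the $\sqrt n$ coming from $\|\boldsymbol{w}\|_1\le\sqrt n\,\|\boldsymbol{w}\|_2$) is the right idea; that computation yields $(n-1)!\,\mathrm{Vol}_{n-1}\bigl(\pi_{\boldsymbol{w}^\perp}(\Sigma_n)\bigr)=\tfrac12\bigl(\|\boldsymbol{w}\|_1+|\sum_j w_j|\bigr)\le\sqrt n$, which is in fact sharper than the $n+\sqrt n$ in the statement, so the proposition certainly follows. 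Since you would either carry out or cite this simplex estimate as a lemma, I would count your proposal as structurally complete; the only caveat is that the specific bound $D_{\boldsymbol{w},i}\le(1+\|\boldsymbol{w}\|_1)\prod_{j\ne i}d_j$ you float in passing is not quite the Cauchy-formula output, so that intermediate guess should be replaced by the facet-normal computation above if you want a self-contained argument.
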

The following theorem gives bounds for angle discrepancy and radius discrepancy of $Z(\boldsymbol{f})$ in terms of the Erd\"{o}s-Tur\'{a}n size of $\boldsymbol{f}$. For one dimensional version see for instance \cite{ET} and \cite{HN}.

\begin{thm}\label{th1.4}\cite{DGS} Let $A_1,\ldots,A_n$ be a non-empty finite subsets of $\mathbb{Z}^n$ such that $$MV_n(Q_1,\ldots,Q_n)\geq1$$ with $Q_i=conv(A_i)$  for $n\geq2$. Let $f_1,\ldots,f_n\in\mathbb{C}[x_1^{\pm1},\ldots,x_n^{\pm1}]$ with $supp(f_i)\subseteq A_i$ and such that $\mathcal{R}es_{\mathcal{A}^{\boldsymbol{v}}}(f_{d,1}^{\boldsymbol{v}},\ldots,f_{d,n}^{\boldsymbol{v}})\neq0$ for all $\boldsymbol{v}\in\mathbb{Z}^n\setminus\{\boldsymbol{0}\}.$ Then
	\begin{equation}\label{bound:ang}
		\Delta_{\text{ang}}(Z(\boldsymbol{f}))\leq 66n2^n(18+\log^+(\eta(\boldsymbol{f})^{-1}))^{\frac{2}{3}(n-1)}\eta(\boldsymbol{f})^{\frac{1}{3}}.
	\end{equation}\label{bound:rad}
	Moreover, for $0<\varepsilon<1$,
	\begin{equation}
		\Delta_{\text{rad}}(Z(\boldsymbol{f}), \varepsilon)\leq \frac{2n}{\varepsilon}\eta(\boldsymbol{f}).
	\end{equation}
\end{thm}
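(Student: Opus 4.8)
The plan is to reduce both discrepancies, one monomial at a time, to the classical one–variable Erd\"{o}s-Tur\'{a}n inequality, by pushing the $0$-cycle $Z(\boldsymbol f)$ forward under power maps and feeding the directional resultants into the estimate through a Poisson-type product formula for sparse resultants. Under the standing hypothesis $\mathcal{R}es_{\mathcal{A}^{\boldsymbol v}}(f_1^{\boldsymbol v},\ldots,f_n^{\boldsymbol v})\neq0$ for all $\boldsymbol v\in\mathbb{Z}^n\setminus\{\boldsymbol 0\}$, Bernstein's Theorem \ref{BerTh} shows that $Z(\boldsymbol f)$ is a finite subset of $(\mathbb{C}^*)^n$ with $\deg Z(\boldsymbol f)=MV_n(Q_1,\ldots,Q_n)=:D$. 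For a primitive vector $\boldsymbol a\in\mathbb{Z}^n$ let $\psi_{\boldsymbol a}\colon(\mathbb{C}^*)^n\to\mathbb{C}^*$, $\boldsymbol x\mapsto\boldsymbol x^{\boldsymbol a}$, and set $g_{\boldsymbol a}(t):=\prod_{\boldsymbol\xi\in Z(\boldsymbol f)}(t-\boldsymbol\xi^{\boldsymbol a})^{m_{\boldsymbol\xi}}$; this is the univariate polynomial whose zero cycle is the push-forward $(\psi_{\boldsymbol a})_*Z(\boldsymbol f)$, it has degree $D$, and since $Z(\boldsymbol f)\subset(\mathbb{C}^*)^n$ both $\mathrm{lc}(g_{\boldsymbol a})$ and $\mathrm{tc}(g_{\boldsymbol a})$ are nonzero. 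Up to a nonzero scalar — irrelevant for the ratios below — $g_{\boldsymbol a}$ equals the $t$-resultant $\mathcal{R}es(f_1,\ldots,f_n,\boldsymbol x^{\boldsymbol a}-t)$, whence a height bound for sparse resultants gives $M(g_{\boldsymbol a})\le\|g_{\boldsymbol a}\|_{\sup}\le\prod_{i=1}^n\|f_i\|_{\sup}^{D_{\boldsymbol a,i}}$ (with $M$ the Mahler measure and $D_{\boldsymbol a,i}$ a projected mixed volume), while the Poisson/product formula for sparse resultants \cite{DS,DGS} identifies, up to sign and the common scalar, $|\mathrm{lc}(g_{\boldsymbol a})\,\mathrm{tc}(g_{\boldsymbol a})|$ with $\prod_{\boldsymbol v}|\mathcal{R}es_{\mathcal{A}^{\boldsymbol v}}(f_1^{\boldsymbol v},\ldots,f_n^{\boldsymbol v})|^{|\langle\boldsymbol v,\boldsymbol a\rangle|}$, the factors with $\langle\boldsymbol v,\boldsymbol a\rangle>0$ coming from $\mathrm{lc}$ and those with $\langle\boldsymbol v,\boldsymbol a\rangle<0$ from $\mathrm{tc}$. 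Taking $\boldsymbol w=\boldsymbol a/\|\boldsymbol a\|$ and unwinding the normalizations in the definition of $\eta$, one obtains the key estimate
$$\frac1D\log\frac{M(g_{\boldsymbol a})}{\sqrt{\,|\mathrm{lc}(g_{\boldsymbol a})\,\mathrm{tc}(g_{\boldsymbol a})|\,}}\ \le\ \eta(\boldsymbol f)\qquad\text{for every primitive }\boldsymbol a\in\mathbb{Z}^n,$$
which is precisely the reason the supremum over $\boldsymbol w\in S^{n-1}$ appears in the definition of the Erd\"{o}s-Tur\'{a}n size.

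The radius discrepancy follows almost immediately. Specializing to $\boldsymbol a=\boldsymbol e_j$, $j=1,\ldots,n$, the roots of $g_{\boldsymbol e_j}$ are the $j$-th coordinates of the points of $Z(\boldsymbol f)$ counted with multiplicity. An elementary Jensen/Mahler-measure count shows that a degree-$D$ polynomial $g$ with $\mathrm{lc}(g)\,\mathrm{tc}(g)\neq0$ has at most $(\log R)^{-1}\log(M(g)/|\mathrm{lc}(g)|)$ roots with $|t|\ge R$ and at most $(\log R)^{-1}\log(M(g)/|\mathrm{tc}(g)|)$ roots with $|t|\le R^{-1}$ (the second via the reciprocal polynomial), hence at most $2(\log R)^{-1}\log(M(g)/\sqrt{|\mathrm{lc}(g)\mathrm{tc}(g)|})$ roots outside the annulus $R^{-1}<|t|<R$. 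Taking $R=(1-\varepsilon)^{-1}$, so that $\log R\ge\varepsilon$, applying this to $g=g_{\boldsymbol e_j}$, invoking the key estimate, and noting that a point of $Z(\boldsymbol f)$ is excluded from $Z_\varepsilon$ only if one of its coordinates has modulus outside $(1-\varepsilon,(1-\varepsilon)^{-1})$, a union bound over $j$ and division by $D$ give $\Delta_{\text{rad}}(Z(\boldsymbol f),\varepsilon)\le\frac{2n}{\varepsilon}\,\eta(\boldsymbol f)$.

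The angle discrepancy is the substantive part. Observe first that the Weyl sums of the argument point set $\{\arg\boldsymbol\xi:\boldsymbol\xi\in Z(\boldsymbol f)\}\subset(\mathbb{R}/2\pi\mathbb{Z})^n$ are Fourier coefficients of one-dimensional marginals: writing $\boldsymbol a\in\mathbb{Z}^n\setminus\{\boldsymbol 0\}$ as $k\boldsymbol a_0$ with $k=\gcd(\boldsymbol a)$ and $\boldsymbol a_0$ primitive, one has $\frac1D\sum_{\boldsymbol\xi\in Z(\boldsymbol f)}e^{i\langle\boldsymbol a,\arg\boldsymbol\xi\rangle}=\widehat{\mu_{\boldsymbol a_0}}(k)$, where $\mu_{\boldsymbol a_0}$ is the normalized argument distribution of the roots of $g_{\boldsymbol a_0}$. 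For each primitive $\boldsymbol a_0$, the one-variable Erd\"{o}s-Tur\'{a}n inequality (in its LeVeque-type $L^2$ form) bounds $\sum_{k\ge1}k^{-2}\,|\widehat{\mu_{\boldsymbol a_0}}(k)|^2$ by a constant times $\frac1D\log(M(g_{\boldsymbol a_0})/\sqrt{|\mathrm{lc}\cdot\mathrm{tc}|})$, hence — by the key estimate — by a constant times $\eta(\boldsymbol f)$. One then feeds these bounds into a multidimensional LeVeque-type discrepancy inequality, which controls $\Delta_{\text{ang}}(Z(\boldsymbol f))$ by a dimensional constant times $(18+\log^+ S^{-1})^{\frac23(n-1)}S^{1/3}$, where $S=\sum_{\boldsymbol a\neq\boldsymbol 0}r(\boldsymbol a)^{-2}\,\bigl|\tfrac1D\sum_{\boldsymbol\xi}e^{i\langle\boldsymbol a,\arg\boldsymbol\xi\rangle}\bigr|^2$ and $r(\boldsymbol a)=\prod_j\max(1,|a_j|)$; the cube root is the source of the exponent $\tfrac13$, and the inductive proof of this inequality — peeling off one coordinate at a time down to the classical one-variable LeVeque inequality — produces the exponent $\tfrac23(n-1)$ on the logarithm. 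Reorganizing the lattice sum over primitive $\boldsymbol a_0$ and $k\ge1$ using $r(k\boldsymbol a_0)\ge k\,r(\boldsymbol a_0)$ gives $S\le\bigl(\sum_{\boldsymbol a_0\text{ primitive}}r(\boldsymbol a_0)^{-2}\bigr)\sup_{\boldsymbol a_0}\sum_{k\ge1}k^{-2}|\widehat{\mu_{\boldsymbol a_0}}(k)|^2\le C(n)\,\eta(\boldsymbol f)$, since $\sum_{\boldsymbol a\neq\boldsymbol 0}r(\boldsymbol a)^{-2}=(1+\tfrac{\pi^2}{3})^n-1<\infty$; substituting back and tracking constants (the factor $2^n$ entering when one passes from discrepancy with respect to boxes anchored at the origin to discrepancy with respect to general boxes $\prod_j[\alpha_j,\beta_j]$) yields the stated bound $\Delta_{\text{ang}}(Z(\boldsymbol f))\le 66\,n\,2^n\bigl(18+\log^+\eta(\boldsymbol f)^{-1}\bigr)^{\frac23(n-1)}\eta(\boldsymbol f)^{1/3}$.

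The two points where I expect real work are: the key estimate — i.e. making the Poisson/product formula for the sparse resultant $\mathcal{R}es(f_1,\ldots,f_n,\boldsymbol x^{\boldsymbol a}-t)$ precise and matching the combinatorial weights $D_{\boldsymbol w,i}$ and $|\langle\boldsymbol v,\boldsymbol w\rangle|$ appearing in $\eta(\boldsymbol f)$ with the (projected) mixed volumes and inner products that the resultant theory produces; and the exact multidimensional LeVeque-type discrepancy inequality, whose constant $66\,n\,2^n$ and log-exponent $\tfrac23(n-1)$ must be obtained through a careful induction on the dimension. Everything else — the elementary Jensen estimates, the classical one-variable Erd\"{o}s-Tur\'{a}n inequality, and the convergence of the lattice sums $\sum_{\boldsymbol a\neq\boldsymbol 0}r(\boldsymbol a)^{-2}$ — is routine bookkeeping.
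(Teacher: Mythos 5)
You should first note that the paper itself gives no proof of this statement: Theorem \ref{th1.4} is quoted verbatim, with its constants, from \cite{DGS}, so there is no in-paper argument to compare against. With that caveat, your outline does reconstruct the strategy of the cited source: push the zero cycle forward under the monomial maps $\boldsymbol{x}\mapsto\boldsymbol{x}^{\boldsymbol{a}}$, control the resulting univariate polynomials via the hidden-variable resultant and the Poisson formula for sparse resultants of \cite{DS}, treat the radial discrepancy by a Jensen/Mahler root count coordinate by coordinate (your computation here is correct and does give $\frac{2n}{\varepsilon}\eta(\boldsymbol{f})$ once your ``key estimate'' is granted), and treat the angular discrepancy through Weyl sums and a LeVeque-type inequality on the $n$-torus.

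The difficulty is that the two ingredients carrying all the content are asserted rather than proved, and you say so yourself. (a) The key estimate: you must actually show that $g_{\boldsymbol{a}}$ agrees up to a scalar with the resultant of $f_1,\ldots,f_n,\boldsymbol{x}^{\boldsymbol{a}}-t$; that this scalar (which is itself a product of directional resultants and monomials in the coefficients) cancels from the ratio $M(g_{\boldsymbol{a}})/\sqrt{|\mathrm{lc}(g_{\boldsymbol{a}})\,\mathrm{tc}(g_{\boldsymbol{a}})|}$; that the extreme coefficients are given by the Poisson formula with exponents exactly $|\langle\boldsymbol{v},\boldsymbol{a}\rangle|$, which is where the factor $\tfrac12$ and the weights $D_{\boldsymbol{w},i}$ in the definition of $\eta$ must be matched; and that $M(g_{\boldsymbol{a}})\leq\prod_i\|f_i\|_{\sup}^{D_{\boldsymbol{a},i}}$ --- this last is not a triangle-inequality bound on $\|g_{\boldsymbol{a}}\|_{\sup}$ (the resultant is a large polynomial in the coefficients of the $f_i$) but a Mahler-measure/height estimate for resultants that needs its own argument. (b) The multidimensional LeVeque-type discrepancy inequality, with the precise constant $66n2^n$, the constant $18$, the exponent $\tfrac13$ and the log exponent $\tfrac23(n-1)$, is the substantive analytic ingredient behind (\ref{bound:ang}); invoking it without proof (or without a precise citation), together with the unproved one-variable $L^2$ bound $\sum_{k\geq1}k^{-2}|\widehat{\mu_{\boldsymbol{a}_0}}(k)|^2\lesssim\frac1D\log\bigl(M(g_{\boldsymbol{a}_0})/\sqrt{|\mathrm{lc}\,\mathrm{tc}|}\bigr)$, means the stated bound and its constants cannot be recovered from what you wrote. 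In short, your proposal is a faithful road map of the argument in \cite{DGS}, but not a self-contained proof of the theorem as stated.
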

For a random Bernoulli polynomial mapping $\boldsymbol{f}_d$ we let $Z(\boldsymbol{f}_d)$ be the set of simultaneous zeros of $\boldsymbol{f}_d$. We define the angle discrepancy  $\Delta_{\text{ang}}(Z(\boldsymbol{f}))$ and the radius discrepancy $\Delta_{\text{rad}}(Z(\boldsymbol{f}), \varepsilon)$ as above 
whenever $Z(\boldsymbol{f}_d)$ is a discrete set of points. Otherwise, we set $\Delta_{\text{rad}}(Z(\boldsymbol{f}), \varepsilon)= \Delta_{\text{ang}}(Z(\boldsymbol{f}))=1$. Note that as our
probability space $(Poly_{n,d}, Prob_d)$ is discrete, measurability of these random variables is not an issue in this setting. Next, we estimate the asymptotic expected discrepancies: 
 
 \begin{proposition}\label{prop}
 Let $\boldsymbol{f}_d=(f_{d,1},\ldots,f_{d,n})$ be a random Bernoulli polynomial mapping of degree $d\geq 1$. Then 
 
 \begin{equation}
 	\lim_{d\to\infty}\mathbb{E}[\Delta_{\text{ang}}(Z(\boldsymbol{f}_d))]=0\quad\text{and}\quad \lim_{d\to\infty}\mathbb{E}[\Delta_{\text{rad}}(Z(\boldsymbol{f}_d))]=0.
 \end{equation} 
 \end{proposition}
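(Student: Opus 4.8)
The plan is to combine the deterministic discrepancy bounds of Theorem~\ref{th1.4} with the probabilistic control of the exceptional set from Theorem~\ref{th:main}, exactly as in the proof of Corollary~\ref{cor:eq}. First I would fix the data $A_i = d\Sigma_n\cap\mathbb{Z}^n$ and $Q_i = \mathrm{conv}(A_i) = d\Sigma_n$, so that $D = MV_n(Q_1,\ldots,Q_n) = d^n \geq 1$ and Theorem~\ref{th1.4} applies once we know the directional resultants do not vanish. For $\boldsymbol{f}_d\in Poly_{n,d}\setminus\mathcal{E}_{n,d}$, Theorem~\ref{th:main} guarantees $\mathcal{R}es_{\mathcal{A}^{\boldsymbol{v}}}(f_{d,1}^{\boldsymbol{v}},\ldots,f_{d,n}^{\boldsymbol{v}})\neq 0$ for all $\boldsymbol{v}\in\mathbb{Z}^n\setminus\{\boldsymbol{0}\}$, so $Z(\boldsymbol{f}_d)$ is discrete and the hypotheses of Proposition~\ref{prop1} and Theorem~\ref{th1.4} are met.

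The key deterministic estimate is the bound on the Erd\"{o}s--Tur\'{a}n size: applying Proposition~\ref{prop1} with $d_i = d$ and using the bound $\log\|f_{d,i}\|_{\sup}\leq\log\binom{n+d}{d} = O(n\log d)$ established in the proof of Corollary~\ref{cor:eq}, I get
\begin{equation*}
	\eta(\boldsymbol{f}_d)\leq \frac{1}{d^n}\,(n+\sqrt{n})\,d^n\sum_{i=1}^n\frac{\log\|f_{d,i}\|_{\sup}}{d} = (n+\sqrt{n})\sum_{i=1}^n\frac{\log\|f_{d,i}\|_{\sup}}{d} = O\!\left(\frac{\log d}{d}\right),
\end{equation*}
a bound that is \emph{uniform} over all $\boldsymbol{f}_d\in Poly_{n,d}\setminus\mathcal{E}_{n,d}$ since it only uses the worst-case sup-norm estimate. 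Feeding this into the angle bound \eqref{bound:ang} of Theorem~\ref{th1.4} gives $\Delta_{\text{ang}}(Z(\boldsymbol{f}_d))\leq 66n2^n(18+\log^+(\eta(\boldsymbol{f}_d)^{-1}))^{\frac{2}{3}(n-1)}\eta(\boldsymbol{f}_d)^{1/3}$, which is $O\big((\log d)^{\frac{2}{3}(n-1)}(\log d/d)^{1/3}\big) = o(1)$, again uniformly over the complement of $\mathcal{E}_{n,d}$; and the radius bound gives $\Delta_{\text{rad}}(Z(\boldsymbol{f}_d),\varepsilon)\leq \frac{2n}{\varepsilon}\eta(\boldsymbol{f}_d) = o(1)$ for each fixed $\varepsilon$.

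To pass to expectations I split $\mathbb{E}[\Delta_{\text{ang}}(Z(\boldsymbol{f}_d))]$ over $Poly_{n,d}\setminus\mathcal{E}_{n,d}$ and $\mathcal{E}_{n,d}$. On the good set the integrand is bounded by the uniform $o(1)$ quantity above; on the bad set both discrepancies are set to $1$ by convention, so that contribution is at most $Prob_d(\mathcal{E}_{n,d})\leq K/d\to 0$ by Theorem~\ref{th:main}. Hence
\begin{equation*}
	\mathbb{E}[\Delta_{\text{ang}}(Z(\boldsymbol{f}_d))] \leq \sup_{\boldsymbol{f}_d\notin\mathcal{E}_{n,d}}\Delta_{\text{ang}}(Z(\boldsymbol{f}_d)) + Prob_d(\mathcal{E}_{n,d}) = o(1) + O(1/d) \to 0,
\end{equation*}
and the same splitting handles $\mathbb{E}[\Delta_{\text{rad}}(Z(\boldsymbol{f}_d),\varepsilon)]$ for each fixed $\varepsilon\in(0,1)$. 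The only subtlety, which is the main thing to get right rather than a genuine obstacle, is that $\Delta_{\text{rad}}$ depends on $\varepsilon$ and the statement as written suppresses it: one must read $\lim_{d\to\infty}\mathbb{E}[\Delta_{\text{rad}}(Z(\boldsymbol{f}_d))]=0$ as holding for each fixed $\varepsilon$ (the $\varepsilon$ is held constant while $d\to\infty$, since $\eta(\boldsymbol{f}_d)\to 0$ forces $\frac{2n}{\varepsilon}\eta(\boldsymbol{f}_d)\to 0$). With that convention the argument is complete; no further input beyond Theorems~\ref{th:main} and~\ref{th1.4} and Proposition~\ref{prop1} is needed.
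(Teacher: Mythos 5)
Your proposal is correct and follows essentially the same route as the paper: split the expectation over $\mathcal{E}_{n,d}$ and its complement, bound the contribution of $\mathcal{E}_{n,d}$ by $Prob_d(\mathcal{E}_{n,d})\leq K/d$ since both discrepancies are at most $1$, and bound the complement uniformly by feeding the Erd\"{o}s--Tur\'{a}n size estimate of Proposition~\ref{prop1} into Theorem~\ref{th1.4}. The one step you leave implicit --- and which the paper records explicitly --- is that substituting the uniform upper bound $\eta(\boldsymbol{f}_d)=O(\log d/d)$ into the angle bound requires noting that $t\mapsto t^{1/3}\bigl(18+\log^+(1/t)\bigr)^{\frac{2}{3}(n-1)}$ is increasing for small $t>0$; your explicit reading of the suppressed $\varepsilon$ in $\Delta_{\text{rad}}$ is a reasonable and correct interpretation of the statement.
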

\begin{proof} We adapt the argument in [\cite{DGS}, Theorem 4.9] to our setting. Consider the expected value of the angular discrepancy which is 
	\begin{equation}
		\mathbb{E}[Z(\boldsymbol{f}_d)]=\int_{Poly_{n,d}}\Delta_{\text{ang}}(Z(\boldsymbol{f}_d))dProb_d(\boldsymbol{f}_d).
	\end{equation}
Let $\mathcal{E}_{n,d}$ be the exceptional set which contains all the systems in $Poly_{n,d}$ with zero directional resultants for some nonzero primitive vector $\boldsymbol{v}\in\mathbb{Z}^n$ as described in the proof of Theorem 1.1. Since $0<\Delta_{\text{ang}}(Z(\boldsymbol{f}_d))\leq1$ there exist constants  $K_1$ which is independent of $d$ such that
	\begin{equation}
		0\leq\int_{\mathcal{E}_{n,d}}\Delta_{\text{ang}}(Z(\boldsymbol{f}_d))dProb(\boldsymbol{f}_d)\leq   Prob_d\{\mathcal{E}_{n,d}\}\leq  K_1 d^{-1}.
	\end{equation}
Hence, 
$$\int_{\mathcal{E}_{n,d}}\Delta_{\text{ang}}(Z(\boldsymbol{f}_d))dProb_d(\boldsymbol{f}_d)\to 0$$ as $d\to\infty$. 

Let $\boldsymbol{f}_d\in Poly_{n,d}\setminus\mathcal{E}_{n,d}$, then by Proposition \ref{prop1} 

\begin{flalign}
	\eta(\boldsymbol{f}_d) & \leq\frac{1}{d^n}\left( d^{n-1}(n+\sqrt{n})\sum_{i=1}^{n}\log||f_{d,i}||_{\sup} \right)\\
	& \leq \frac{1}{d^n}\left( d^{n-1}(n+\sqrt{n})\sum_{i=1}^n\log(d+1) \right)\\
		&\leq K_2\frac{\log d}{d}	
\end{flalign}
for a constant $K_2$ which is independent of $d$. 
On the other hand, by Theorem \ref{th1.4} for $\boldsymbol{f}_d\in Poly_{n,d}\setminus\mathcal{E}_{n,d}$ there exists constants $K_3,K_4,K_5$ and $K_6$ such that
\begin{flalign}
\Delta_{\text{ang}}(Z(\boldsymbol{f}_d)) & \leq K_3\eta(\boldsymbol{f}_d)^{\frac{1}{3}}\log\left( \frac{K_4}{\eta(\boldsymbol{f}_d)} \right)^{\frac{2}{3}(n-1)}\\
& \leq K_5\left(\frac{\log d}{d}\right)^{\frac{1}{3}}\log\left( \frac{d}{\log d} \right)^{\frac{2}{3}(n-1)}  \leq K_6\frac{\log d^{{\frac{2n}{3}}-\frac{1}{3}}}{d^{\frac{1}{3}}}.	
\end{flalign}
  since the function $t^{\frac{1}{3}}\log({\frac{a}{t}})^{\frac{n-1}{3}}$ is increasing for small values of $t>0$. Combining the equations (4.9) and (4.11), we deduce that $\displaystyle\lim_{d\to\infty}\mathbb{E}[\Delta_{\text{ang}}(Z(\boldsymbol{f}_d))]=0$.
 
The proof of the second assertion is analogous and we omit it. 
\end{proof}

\begin{proof}[Proof of Theorem \ref{th:exp}] We adapt the argument in \cite[Theorem 1.8]{DGS} to our setting. Let us denote $\nu_{d}:=\frac{\mathbb{E}[\widetilde{Z}(\boldsymbol{f}_d)]}{d^n}$, where $\mathbb{E}[\widetilde{Z}(\boldsymbol{f}_d)]$ is the expected zero measure and $\nu_{\text{Haar}}$ be the  Haar probability on $(S^1)^n$. We need to show that for each continuous function $\varphi$ with compact support in $\Bbb{C}^n$ we have $\int \varphi d\nu_d\to\int \varphi d\nu_{\text{Haar}}$ as $d\to\infty$. To this end, it is enough to prove the claim for characteristic functions $\varphi_U$ of the open sets  
	\begin{equation}\label{set:open}
		U:=\{(z_1,\ldots,z_n)\in\mathbb{C}^n: r_{1,j}<|z_j|<r_{2,j}\ \text{and}\ \alpha_j<\arg(z_j)<\beta_j\}
	\end{equation}
where $0\leq r_{1,j}<r_{2,j}\leq\infty$, $r_{i,j}\neq 1$ for $i=1,2$ and $-\pi<\alpha_j<\beta_j\leq\pi$. 

First, we consider the case when $U\cap(S^1)^n=\emptyset$. Then one can find an  $0<\varepsilon<1$ such that $U$ is disjoint from the set
\begin{equation}
\{(\xi_1,\ldots,\xi_n)\in\mathbb{C}^n: 1-\varepsilon<|\xi_j|<(1-\varepsilon)^{-1}~ \text{for all $j$}\}.
\end{equation}
Let $\mathcal{E}_{n,d}$ be the exceptional set as in the proof of Theorem \ref{th:main}. If $\boldsymbol{f}_d\in Poly_{n,d}\setminus\mathcal{E}_{n,d}$ then $Z(\boldsymbol{f}_d)$ is discrete and
\begin{equation*}
	\#\{U\cap Z(\boldsymbol{f}_d)\}\leq deg(Z(\boldsymbol{f}_d))\Delta_{\text{rad}}(\boldsymbol{f}_d,\varepsilon)\leq d^n\Delta_{\text{rad}}(\boldsymbol{f}_d,\varepsilon).
\end{equation*}
 On the other hand, if $\boldsymbol{f}_d\in \mathcal{E}_{n,d}$ then by definition $deg(\widetilde{Z}(\boldsymbol{f}_d)|_{U})=0$. Hence, $$\nu_{d}(U)\leq\mathbb{E}[\Delta_{\text{rad}}(\widetilde{Z}(\boldsymbol{f}_d,\varepsilon))]$$ and by Proposition \ref{prop}, $$\lim_{d\to\infty}\int_{Poly_{n,d}}\varphi_U d\nu_d=0=\nu_{\text{Haar}}(U).$$
If $U\cap(S^1)^n\neq\emptyset$ let
\begin{equation}
	\widetilde{U}=\{\boldsymbol{z}: \alpha_j\leq\arg(z_j)\leq\beta_j~~\text{for all $j$ }\}.
\end{equation}
Then we have
\begin{equation*}
 \nu_{d}(U)-\prod_{j=1}^n\frac{\beta_j-\alpha_j}{2\pi} = \left( \nu_{d}(\widetilde{U}) - \prod_{j=1}^n\frac{\beta_j-\alpha_j}{2\pi} \right)-\nu_{d}(\widetilde{U}\setminus U).
\end{equation*}
By Theorem \ref{th:main} we have
\begin{eqnarray}\label{eqnl}
\left| \nu_{d}(\widetilde{U})-\prod_{j=1}^n\frac{\beta_j-\alpha_j}{2\pi}\right|  & \leq& \int_{Poly_{n,d}\setminus\mathcal{E}_{n,d}}\left| \frac{deg(Z(\boldsymbol{f}_d)_{\boldsymbol{\alpha},\boldsymbol{\beta}})}{d^n}-\prod_{j=1}^n\frac{\beta_j-\alpha_j}{2\pi} \right| dProb_d(\boldsymbol{f}_d) \nonumber + \frac{Kn}{d}\\
			& \leq & \int_{Poly_{n,d}\setminus\mathcal{E}_{n,d}}\Delta_{\text{ang}}(Z(\boldsymbol{f}_d)) dProb_d(\boldsymbol{f}_d)+ \frac{Kn}{d}.
\end{eqnarray} 
Note that the set $\widetilde{U}\setminus U$ is a union of a finite number of subsets $U_m$ of the form (\ref{set:open}) such that $U_m\cap(S^1)^n=\emptyset$ for all $m$, we have $\lim_{d\to\infty}\nu_{d}(U_m)=0$ by previous case and hence $\lim_{d\to\infty}\nu_{d}(\overline{U}\setminus U)=0$. Therefore, by Proposition \ref{prop} and (\ref{eqnl}), 
\begin{equation*} 
	\lim_{d\to\infty}\nu_{d}(U)=\lim_{d\to\infty}(\widetilde{U}) =\prod_{j=1}^n\frac{\beta_j-\alpha_j}{2\pi}=\nu_{Haar}(U)
\end{equation*}
which completes the proof. 
\end{proof}


\end{document}